\documentclass[letterpaper, 10 pt, conference]{ieeeconf}
\IEEEoverridecommandlockouts
\overrideIEEEmargins 

\usepackage{cite}
\usepackage{amsmath,amssymb,amsfonts}
\usepackage{algorithmic}
\usepackage{graphicx}
\usepackage{textcomp}
\usepackage{xcolor}
\usepackage{amsthm}
\usepackage{subfig}
\usepackage{todonotes}

\DeclareMathOperator{\atantwo}{atan2}

\def\BibTeX{{\rm B\kern-.05em{\sc i\kern-.025em b}\kern-.08em
    T\kern-.1667em\lower.7ex\hbox{E}\kern-.125emX}}
\newtheorem{theorem}{Theorem}[]
\newtheorem{lemma}[theorem]{Lemma}
\newtheorem{proposition}[theorem]{Proposition}

\theoremstyle{definition}

\begin{document}

\title{Generalization of Optimal Geodesic Curvature Constrained Dubins' Path on Sphere with Free Terminal Orientation
\\
\thanks{$^{1}$ Deepak Prakash Kumar and Swaroop Darbha are with the Department of Mechanical Engineering, Texas A\&M University, College Station, TX 77843, USA (e-mail: {\tt\small deepakprakash1997@gmail.com, dswaroop@tamu.edu}).\\
$^{2}$ Satyanarayana Gupta Manyam is with the DCS Corporation, 4027 Col Glenn Hwy, Dayton, OH 45431, USA (e-mail: {\tt\small msngupta@gmail.com})\\
$^{3}$ David Casbeer is with the Control Science Center, Air Force Research Laboratory, Wright-Patterson Air Force Base, OH 45433 USA (e-mail:
{\tt\small david.casbeer@afrl.us.mil})\\
Distribution Statement A. Distribution is unlimited. AFRL-2024-4879 Cleared 09-05-2024.
}
}

\author{Deepak Prakash Kumar$^{1}$, Swaroop Darbha$^{1}$, Satyanarayana Gupta Manyam$^{2}$, David Casbeer$^{3}$}


\maketitle

\begin{abstract}
In this paper, motion planning for a Dubins vehicle on a unit sphere to attain a desired final location is considered. The radius of the Dubins path on the sphere is lower bounded by $r$. In a previous study, this problem was addressed, wherein it was shown that the optimal path is of type $CG, CC,$ or a degenerate path of the same for $r \leq \frac{1}{2}.$ Here, $C = L, R$ denotes an arc of a tight left or right turn of minimum turning radius $r,$ and $G$ denotes an arc of a great circle. In this study, the candidate paths for the same problem are generalized to model vehicles with a larger turning radius. In particular, it is shown that the candidate optimal paths are of type $CG, CC,$ or a degenerate path of the same for $r \leq \frac{\sqrt{3}}{2}.$
Noting that at most two $LG$ paths and two $RG$ paths can exist for a given final location, this article further reduces the candidate optimal paths by showing that only one $LG$ and one $RG$ path can be optimal, yielding a total of seven candidate paths for $r \leq \frac{\sqrt{3}}{2}.$ Additional conditions for the optimality of $CC$ paths are also derived in this study. 
\end{abstract}

\section{Introduction}

With the increasing utilization of fixed-wing Unmanned Aerial Vehicles (UAVs) for civilian and military applications, path planning for such vehicles to travel from one location and orientation (denoted together as a configuration) to another with minimum time or fuel is critical.
Since such vehicles cannot hover mid-air, and have a bound on the rate of change of their heading, they are typically modeled as a Dubins vehicle \cite{Dubins}. 
A Dubins vehicle is a vehicle that travels at a unit speed and has a bound on the curvature of the path. For such a vehicle, Dubins \cite{Dubins} showed that the minimum time/length path to travel from one configuration to another on a plane is of type $CSC,$ $CCC,$ or a degenerate path of the same. 
Here, $C = L, R$ denotes an arc of a left or a right turn of minimum turning radius, and $S$ denotes a straight-line segment. A different model for vehicles with similar curvature constraints and that can move forwards or backward, known as the Reeds-Shepp vehicle, has also been explored in the literature \cite{Reeds_Shepp}.

The results of \cite{Dubins} and \cite{Reeds_Shepp} were derived without resorting to Pontryagin's Maximum Principle (PMP) \cite{PMP}, a first-order necessary condition for optimality. Studies such as \cite{sussman_geometric_examples} and \cite{boissonat} later studied the same problems using PMP and derived the results systematically. In addition, using phase portraits with PMP in \cite{phase_portrait_kaya, weighted_problem, monroy} yields a simpler method to obtain the optimal paths by breaking down the analysis into a few cases.

While many variants of path planning on a plane have been studied in the literature, fewer studies have explored such problems in 3D. For instance, in \cite{sussman_3D}, a total curvature-constrained 3D Dubins problem is considered in which the vehicle must travel from a given location and heading direction to another. However, the complete configuration of the vehicle in 3D is not considered in this study. Recently, more studies have explored special cases of motion planning in 3D. For instance, \cite{monroy} studies motion planning on a Riemannian manifold for a Dubins vehicle. In particular, the author showed that the Dubins result on a plane generalizes to a unit sphere for $r = \frac{1}{\sqrt{2}},$ where $r$ is the radius of the tightest turn on the sphere. Studies such as \cite{time_optimal_synthesis_SO3} and \cite{time_optimal_control_satellite} study a generic time-optimal control problem on $SO(3)$ with one and two control inputs, respectively; however, the bound on the control input is assumed to be equal to one, which corresponds to $r = \frac{1}{\sqrt{2}}$. In \cite{3D_Dubins_sphere}, the authors modeled the vehicle's configuration using a Sabban frame and proved that the optimal path is of type $CGC, CCC,$ or a degenerate path for $r \leq \frac{1}{2}.$ In our recent work, a free-terminal orientation variant of the problem in \cite{3D_Dubins_sphere} was studied in \cite{free_terminal_sphere}, wherein only the final location is specified. We showed that the optimal path is of type $CG, CC,$ or a degenerate path of the same for $r \leq \frac{1}{2}$. Furthermore, the arc angle of the second $C$ segment was shown to be greater than or equal to $\pi$ for the $CC$ paths.

However, from \cite{free_terminal_sphere}, it can be observed that the candidate optimal paths for the instances with larger turn radius are not known. Though a sufficient list of candidate paths was obtained in \cite{free_terminal_sphere}, it is desired to obtain a smaller list of paths, if possible. Doing so simplifies tackling variants of the free terminal problem, such as when the sphere is rotating. 
Additionally, in \cite{free_terminal_sphere}, the case of abnormal controls is not completely characterized.
To this end, the main contributions of this article are as follows:
\begin{enumerate}
    \item Completing the argument for abnormal controls in \cite{free_terminal_sphere} by posing the optimal control problem on a Lie group.
    \item Demonstrating the construction of at most two $CG$ paths for a given final location and proving non-optimality of the second $CG$ path for all $r \in (0, 1)$.
    \item Proving non-optimality of a $C_\pi C_{\pi + \beta}$ path for $r \leq \frac{\sqrt{3}}{2}$.
    \item Proving non-optimality of a $C_\alpha C_\pi$ path for $r \leq \frac{\sqrt{3}}{2}$ for $\alpha \approx 0,$ thereby showing that the arc angle of the second $C$ segment is greater than $\pi$ in an optimal $CC$ path. 
\end{enumerate}
Hence, from this article, the candidate optimal paths are shown to be of type $CG, CC,$ or a degenerate path of the same for $r \leq \frac{\sqrt{3}}{2}.$


\section{Model for Dubins vehicle on a Unit Sphere and Candidate Optimal Paths}

In \cite{3D_Dubins_sphere}, a Dubins vehicle on a unit sphere was modeled using a Sabban frame, wherein the vehicle's location/trajectory $\mathbf{X} (s)$ was parameterized in terms of the arc length $s$. Further, $\mathbf{T} (s)$ denoted the unit tangent vector that represents the vehicle's heading. Since $\mathbf{X} (s) \cdot \mathbf{T} (s) = 0,$ $\mathbf{N} (s) := \mathbf{X} (s) \times \mathbf{T} (s)$ was defined to form an orthonormal basis. 
Hence, the rotation matrix $\mathbf{R} (s) = (\mathbf{X} (s) \quad \mathbf{T} (s) \quad \mathbf{N} (s))$ was constructed and used to represent the vehicle's configuration. The evolution of the three vectors using the Sabban frame were given by
\begin{align*}
    \frac{d \mathbf{X} (s)}{ds} &= \mathbf{T} (s), \quad \frac{d \mathbf{T} (s)}{ds} = -\mathbf{X} (s) + u_g (s) \mathbf{N} (s), \\
    \frac{d \mathbf{N} (s)}{ds} &= -u_g (s) \mathbf{T} (s),
\end{align*}
where $u_g \in [-U_{max}, U_{max}],$ the geodesic curvature, was considered to be the control input. Here, $U_{max}$ was considered to be a parameter of the vehicle. 

Since the rotation matrices form a Lie group $SO (3)$, the optimal control problem was posed on this group in our recent work \cite{kumar2024equivalencedubinspathsphere}, wherein the differential equations were rewritten as
\begin{align*}
    \frac{dg}{ds} = \overrightarrow{l}_1 \left(g (s) \right) - u_g (s) \overrightarrow{L}_{12} \left(g (s) \right),
\end{align*}
where $g (s) = \mathbf{R} (s),$ and $\overrightarrow{l}_1 (g (s)) = g (s) l_1$ and $\overrightarrow{L}_{12} (g (s)) = g (s) L_{12}$ are left-invariant vector fields, whose value at the identity of the group (the Lie algebra) are \cite{kumar2024equivalencedubinspathsphere}
\begin{align*}
    l_1 = \begin{pmatrix}
        0 & -1 & 0 \\
        1 & 0 & 0 \\
        0 & 0 & 0
    \end{pmatrix}, \quad L_{12} = \begin{pmatrix}
        0 & 0 & 0 \\
        0 & 0 & 1 \\
        0 & -1 & 0
    \end{pmatrix}.
\end{align*}
The Hamiltonian was constructed as \cite{kumar2024equivalencedubinspathsphere}
\begin{align} \label{eq: Hamiltonian}
    H = -\lambda + h_1 \left(\zeta (s) \right) - \kappa (s) H_{12} \left(\zeta (s) \right),
\end{align}
where $h_1 := \langle\zeta (s), \overrightarrow{l}_1 (g (s)) \rangle$ and $H_{12} := \langle\zeta (s), \overrightarrow{L}_{12} (g (s))\rangle$ are Hamiltonian functions
and $\kappa$ is the optimal geodesic curvature. Here, $\zeta$ at every $s$ is a dual-vector/covector. Furthermore, using PMP \cite{monroy}, $\lambda$ is a constant, and $\lambda = 0, 1.$ Additionally, from \cite{monroy, kumar2024equivalencedubinspathsphere}, the expressions for the derivative of $h_1, h_2,$ and $H_{12}$ with respect to $s$ are given by
\begin{align} \label{eq: evolution_hamiltonian_functions}
\begin{split}
    \dot{h}_1 (\zeta (s)) &= -\kappa (s) h_2 (\zeta (s)), \,\, \dot{H}_{12} (\zeta (s)) = -h_2 (\zeta (s)), \\
    \dot{h}_2 (\zeta (s)) &= H_{12} (\zeta (s)) + \kappa (s) h_1 (\zeta (s)).
\end{split}
\end{align}
From PMP, the optimal control $\kappa$ is such that $H$ is pointwise maximized. Hence, $H_{12} (s)$ dictates the optimal control $\kappa (s)$ \cite{kumar2024equivalencedubinspathsphere}. The optimal control action for $H_{12} \neq 0$ can be immediately obtained using \eqref{eq: Hamiltonian} as $\kappa (s) = -U_{max} \text{sgn} (H_{12} (s)),$
where sgn denotes the signum function. It is now desired to determine the optimal control action for $H_{12} = 0$. To this end, the following lemma from \cite{kumar2024equivalencedubinspathsphere} is utilized.
\begin{lemma}
    If $H_{12} \equiv 0,$ then $\lambda$ cannot be zero; further, for $\lambda = 1, \kappa \equiv 0.$ (Lemma~1 in \cite{kumar2024equivalencedubinspathsphere}).
\end{lemma}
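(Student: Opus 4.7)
The plan is to combine the evolution equations \eqref{eq: evolution_hamiltonian_functions} with conservation of the Hamiltonian along extremals and the non-triviality condition of PMP. The structure is: differentiating $H_{12}\equiv 0$ repeatedly will pin down $h_2$ and then an algebraic relation between $\kappa$ and $h_1$; the conservation law will express $h_1$ in terms of $\lambda$; and the non-triviality of $(\lambda,\zeta)$ will rule out the abnormal branch.

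First, on any interval where $H_{12}\equiv 0$, the middle identity in \eqref{eq: evolution_hamiltonian_functions} gives $0=\dot{H}_{12}=-h_2$, so $h_2\equiv 0$. Differentiating once more and using the third identity, $0=\dot{h}_2=H_{12}+\kappa h_1=\kappa h_1$. Hence $\kappa h_1\equiv 0$ on this interval.

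Next, I would invoke the standard fact that for a minimum-length (i.e., free final arc length) problem the Hamiltonian is conserved along every extremal and equal to zero. With $H_{12}\equiv 0$, equation \eqref{eq: Hamiltonian} collapses to $H=-\lambda+h_1\equiv 0$, so $h_1\equiv\lambda$. Suppose first that $\lambda=0$; then $h_1\equiv 0$ too, so all three of $h_1,h_2,H_{12}$ vanish on the interval. Identifying $h_2$ as the Hamiltonian function associated with the bracket $[l_1,L_{12}]\in so(3)$, and noting that $\{l_1,L_{12},[l_1,L_{12}]\}$ is a basis of $so(3)$, the covector $\zeta$ must vanish identically on the interval. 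This contradicts $(\lambda,\zeta)\neq 0$ in PMP. Hence $\lambda=1$, whereupon $h_1\equiv 1\neq 0$ together with $\kappa h_1\equiv 0$ forces $\kappa\equiv 0$.

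The main obstacle is the basis-spanning step that rules out the abnormal case: one needs that the third Hamiltonian function $h_2$ appearing in \eqref{eq: evolution_hamiltonian_functions} corresponds, via the pairing with $\zeta$, to the missing Lie-algebra direction $[l_1,L_{12}]$, so that the simultaneous vanishing of $h_1,h_2,H_{12}$ really does annihilate all of $so(3)$. Once this left-invariant coordinate identification is in hand, the rest is a short propagation of the singular-arc constraint $H_{12}\equiv 0$ through \eqref{eq: evolution_hamiltonian_functions} combined with the scalar conservation law $H\equiv 0$.
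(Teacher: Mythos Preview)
The paper does not supply its own proof of this lemma; it simply cites Lemma~1 of \cite{kumar2024equivalencedubinspathsphere}. So there is nothing in the present paper to compare against, and your proposal must stand on its own.

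Your argument is correct and is, in fact, the standard one for this type of statement in left-invariant optimal control on $SO(3)$. The chain $H_{12}\equiv 0 \Rightarrow h_2\equiv 0 \Rightarrow \kappa h_1\equiv 0$ via \eqref{eq: evolution_hamiltonian_functions} is exactly right, and so is the use of $H\equiv 0$ (valid because arc length is being minimized, hence the terminal ``time'' is free) to get $h_1\equiv\lambda$. For the abnormal case, your identification of $h_2$ with the Hamiltonian lift of $[l_1,L_{12}]$ is correct: a direct computation with the matrices displayed in the paper gives
\[
[l_1,L_{12}]=\begin{pmatrix}0&0&-1\\0&0&0\\1&0&0\end{pmatrix},
\]
which together with $l_1$ and $L_{12}$ spans $so(3)$. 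The vanishing of $h_1,h_2,H_{12}$ then forces $\zeta\equiv 0$, contradicting the non-triviality clause of PMP when $\lambda=0$. The obstacle you flag is therefore not a gap but a routine verification; you may wish to include the one-line bracket computation above to close it explicitly.
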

Hence, the optimal control actions are as follows:
\begin{align} \label{eq: optimal_controls}
    \kappa (s) \equiv \begin{cases}
        - U_{max} \text{sgn} (H_{12} (s)), & H_{12} (s) \neq 0, \lambda = 0, 1 \\
        0, & H_{12} (s) \equiv 0, \lambda = 1.
    \end{cases}
\end{align}
It should be noted that $\kappa = -U_{max}, U_{max}$ correspond to a tight right and left turn, respectively, of radius $r = \frac{1}{\sqrt{1 + U_{max}^2}},$ and $\kappa = 0$ corresponds to a great circular arc \cite{3D_Dubins_sphere}. Hence, the optimal path is a concatenation of such segments.

For the considered free terminal problem, it is claimed that $H_{12} (L) = 0,$ where $L$ denotes the path length. Such a condition will be crucial to derive a sufficient list of candidate paths.

\begin{lemma} \label{lemma: transversality_condition}
    For the free terminal orientation problem, $H_{12} (L) = 0,$ where $L$ is the length of the path.
\end{lemma}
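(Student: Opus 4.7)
The plan is to invoke the transversality condition of the Pontryagin Maximum Principle in its geometric form on the Lie group $SO(3)$. Because the terminal location is prescribed but the terminal heading is free, the terminal set is a proper submanifold of $SO(3)$, and PMP forces the terminal covector $\zeta(L)$ to annihilate its tangent space. The whole argument reduces to identifying this tangent space and recognizing that it is generated by the left-invariant vector field $\overrightarrow{L}_{12}$.

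First, I would describe the terminal manifold. Writing $g(s) = \mathbf{R}(s)$ and $e_1 = (1,0,0)^\top$, the first column of $g(s)$ is $\mathbf{X}(s) = g(s) e_1$. Fixing the final location $\mathbf{X}(L) = \mathbf{X}_f$ therefore confines $g(L)$ to the submanifold
\begin{align*}
\mathcal{M}_f = \{\,g \in SO(3) : g\, e_1 = \mathbf{X}_f\,\}.
\end{align*}
Parameterizing tangent vectors at $g(L)$ as $\delta g = g(L)\,\xi$ for $\xi \in \mathfrak{so}(3)$, the linearized constraint $\delta g\, e_1 = 0$ becomes $\xi e_1 = 0$. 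Among the skew-symmetric $3 \times 3$ matrices, those annihilating $e_1$ are exactly the scalar multiples of $L_{12}$ given in the excerpt. Hence
\begin{align*}
T_{g(L)} \mathcal{M}_f = \operatorname{span}\bigl\{\,g(L)\, L_{12}\,\bigr\} = \operatorname{span}\bigl\{\,\overrightarrow{L}_{12}(g(L))\,\bigr\}.
\end{align*}

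Second, I would apply the transversality condition. PMP for a free terminal state constrained to $\mathcal{M}_f$ (with terminal cost zero, since only path length is minimized) requires $\langle \zeta(L), v \rangle = 0$ for every $v \in T_{g(L)} \mathcal{M}_f$. Substituting $v = \overrightarrow{L}_{12}(g(L))$ and using the definition $H_{12}(\zeta(s)) = \langle \zeta(s), \overrightarrow{L}_{12}(g(s)) \rangle$ yields $H_{12}(L) = 0$, as claimed.

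The main obstacle is not the calculation but the careful invocation of the correct form of PMP on a Lie group when a component of the terminal state is free. This is already the formulation adopted in \cite{kumar2024equivalencedubinspathsphere}, so I would cite that work and the standard geometric PMP (e.g., as stated in \cite{monroy}) to justify the transversality condition, after which the identification of $T_{g(L)}\mathcal{M}_f$ with the span of $\overrightarrow{L}_{12}(g(L))$ is an immediate linear-algebra computation.
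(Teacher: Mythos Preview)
Your proposal is correct and follows essentially the same approach as the paper: invoke the PMP transversality condition $\langle \zeta(L), v\rangle = 0$ for $v$ tangent to the terminal submanifold, identify that tangent space with the span of $\overrightarrow{L}_{12}(g(L))$, and read off $H_{12}(L)=0$. The only cosmetic difference is that the paper parameterizes the submanifold explicitly by $g(L)e^{L_{12}\theta}$ to exhibit $\overrightarrow{L}_{12}$ as its generating vector field, whereas you linearize the constraint $g e_1 = \mathbf{X}_f$ in the Lie algebra to reach the same conclusion.
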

\begin{proof}
    The transversality condition from PMP dictates that
    \begin{align} \label{eq: transversality_condition}
        \langle \zeta (L), v \rangle = 0, \quad \forall v \in T_{g (L)} S,
    \end{align}
    where $S$ represents the submanifold in which the final configuration lies, and $T_{g (L)} S$ is its tangent plane. For the considered problem, since the final location $\mathbf{X}_f$ is specified, the submanifold is given by
    \begin{align*}
        S &= \{\begin{pmatrix}
            \mathbf{X}_f \,\, \cos{\theta} \mathbf{T}_f - \sin{\theta} \mathbf{N}_f \,\, \sin{\theta} \mathbf{T}_f + \cos{\theta} \mathbf{N}_f 
        \end{pmatrix} \},
    \end{align*}
    where $\theta \in [0, 2 \pi)$. In the above equation, $\mathbf{T}_f$ and $\mathbf{N}_f$ are tangent and tangent-normal vectors arbitrarily picked at $\mathbf{X}_f.$ It is claimed that this submanifold is generated by the vector field $\overrightarrow{L}_{12}.$ Noting that $\overrightarrow{L}_{12}$ represents rotations about $\mathbf{X},$ the rotation matrices obtained from $\overrightarrow{L}_{12}$ are given by
    \begin{align*}
        &\mathbf{R} (s) = g (s) e^{L_{12} s} = g (s) \begin{pmatrix}
            1 & 0 & 0 \\
            0 & \cos{s} & \sin{s} \\
            0 & -\sin{s} & \cos{s}
        \end{pmatrix} \\
        &= \begin{pmatrix}
            \mathbf{X} (s) \,\, \cos{s} \mathbf{T} (s) - \sin{s} \mathbf{N} (s) \,\, \sin{s} \mathbf{T} (s) + \cos{s} \mathbf{N} (s)
        \end{pmatrix}.
    \end{align*}
    Hence, at $\mathbf{X} (s) = \mathbf{X}_f,$ the vector field $\overrightarrow{L}_{12}$ generates the desired submanifold. However, since $\overrightarrow{L}_{12}$ is a vector field, 
    $\overrightarrow{L}_{12} (g (L))$ is tangential to $S.$ Hence, \eqref{eq: transversality_condition} reduces to
    \begin{align*}
        \langle \zeta (L), \overrightarrow{L}_{12} (L) \rangle := H_{12} (L) = 0,
    \end{align*}
    by the definition of $H_{12}$ \cite{kumar2024equivalencedubinspathsphere, monroy}. 
\end{proof}
Now, it is desired to perform a systematic analysis to derive the candidate optimal paths for the considered free terminal problem. In this regard, the following theorem is desired to be shown:
\begin{theorem} \label{theorem: main_theorem_free_terminal}
    A total of seven candidate optimal paths for the free terminal orientation problem on a unit sphere exist for $r \leq \frac{\sqrt{3}}{2},$ and are of type $CG, CC,$ or a degenerate path of the same. Furthermore, for the $CG$ paths, the arc angle of the $G$ segment is less than or equal to $\pi,$ and for the $CC$ paths, the arc angle of the second $C$ segment is greater than $\pi.$
\end{theorem}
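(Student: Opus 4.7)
The strategy is to assemble Theorem~\ref{theorem: main_theorem_free_terminal} from the four contributions itemized in the Introduction, with PMP acting as the unifying tool. By \eqref{eq: optimal_controls}, every segment of an optimal path is a tight $L$-turn, a tight $R$-turn, or a great-circle arc $G$, and by Lemma~\ref{lemma: transversality_condition} the terminal value of $H_{12}$ vanishes. Since on any $G$-segment the evolution equations \eqref{eq: evolution_hamiltonian_functions} specialize (with $\kappa \equiv 0$) to the harmonic oscillator $\ddot H_{12} = -H_{12}$, transversality pins the terminal structure to either a $CG$-ending or a switch point (a $CC$-ending), and it bounds the arc angle of a terminal $G$-segment by $\pi$; this already yields the $G$-angle claim.

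Next I would extend the sufficient-list result from $r \leq \tfrac{1}{2}$ (established in \cite{free_terminal_sphere}) up to $r \leq \tfrac{\sqrt{3}}{2}$. The structural proof in \cite{free_terminal_sphere} is blocked in the larger regime by two families of long $CC$-candidates: the $C_\pi C_{\pi + \beta}$ paths and the $C_\alpha C_\pi$ paths with $\alpha$ near zero. Contributions~3 and~4 exclude each of these families for all $r \leq \tfrac{\sqrt{3}}{2}$, after which the concatenation argument of \cite{free_terminal_sphere}, now unobstructed, forces every candidate optimum into the classes $CG$, $CC$, or a degenerate limit. Contribution~4 simultaneously produces the strict inequality (arc angle of the second $C$ exceeds $\pi$) stated for the $CC$ family.

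The final count of seven is obtained via contribution~2. Writing the endpoint equation for a $CG$ path in the Sabban frame produces, for each turn direction, a one-parameter trigonometric system that admits at most two solutions, denoted $(LG_1, LG_2)$ and $(RG_1, RG_2)$; comparing arc lengths across each pair and verifying that the ``second'' solution is always the longer one for every $r \in (0,1)$ reduces the $CG$ count to one $LG$ and one $RG$. Combined with the four $CC$ types $LL, LR, RL, RR$ and the single degenerate $C$-arc (which arises as both the $G$-to-$0$ limit of $CG$ and the second-$C$-to-$0$ limit of $CC$), one obtains exactly seven candidate paths, as claimed.

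The principal obstacle is the proof of contributions~3 and~4, from which the threshold $r \leq \tfrac{\sqrt{3}}{2}$ should emerge. My plan is to construct, for each putative $C_\pi C_{\pi + \beta}$ or $C_\alpha C_\pi$ candidate, a local perturbation of its switching parameters that preserves the final location while strictly decreasing the path length; feasibility of such a perturbation is controlled by a spherical-trigonometric inequality, and the critical value $r = \tfrac{\sqrt{3}}{2}$ (equivalently $U_{\max} = \tfrac{1}{\sqrt{3}}$) is expected to arise as the largest turning radius for which this inequality remains valid. A routine but notationally heavy computation using \eqref{eq: evolution_hamiltonian_functions} to propagate $(h_1, h_2, H_{12})$ across the switch will be the workhorse, and the length comparison in contribution~2 is a secondary, more direct monotonicity argument in $r$.
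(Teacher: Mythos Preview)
Your overall architecture (PMP $\Rightarrow$ segment types, transversality at $s=L$, then eliminate long concatenations, then cut the $CG$ multiplicity) matches the paper's, but three points need correction.

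\textbf{The $G$-angle bound does not come from the harmonic oscillator.} On a $G$-segment the control is singular, which means $H_{12}\equiv 0$ there; the equation $\ddot H_{12}+H_{12}=0$ is then identically $0=0$ and says nothing about the arc length of $G$. The transversality condition $H_{12}(L)=0$ is likewise automatic on a terminal $G$-segment. In the paper the inequality $\phi_2\le\pi$ is a \emph{byproduct of the explicit two-solution analysis} for the $CG$ endpoint equation (Lemma~\ref{lemma: non-optimality_second_LG_path}): once the final location is parameterized on planes parallel to the $L$-circle, one computes $\sin\phi_2^{s1}=\sqrt{1-r^2-d^2}/\sqrt{1-r^2}\ge 0$, forcing $\phi_2^{s1}\in[0,\pi]$. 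So this claim should be moved into contribution~2, not derived from PMP.

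\textbf{The enumeration of the seven candidates is wrong.} There are no $LL$ or $RR$ paths---two consecutive arcs of the same sign merge into a single $C$ arc---so the $CC$ family contributes only $LR$ and $RL$. The count is $LG,\;RG,\;LR,\;RL$ together with the three degenerate one-segment paths $L,\;R,\;G$, giving seven.

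\textbf{The elimination of $C_\pi C_{\pi+\beta}$ and $C_\alpha C_\pi$ is geometric, not a PMP perturbation.} A ``local perturbation of switching parameters that preserves the final location'' is generically impossible for a $CC$ path in this problem: two arc angles versus a two-dimensional target leave no continuous degree of freedom within the same path type. The paper instead constructs an \emph{alternate path of a different type} and compares lengths by spherical convexity. For $L_\alpha R_{\pi+\beta}$ it first shows (Lemma~\ref{lemma: regions_optimality_CC_pi_beta}) that optimality forces $\mathbf X_f$ to lie inside the initial $R$-circle, and then checks (Lemma~\ref{lemma: non_optimality_Cpi_Cpi_beta}) that for $r\le\sqrt{3}/2$ the endpoint of $L_\pi R_{\pi+\beta}$ always lies outside that circle; the threshold $\sqrt{3}/2$ is exactly where the endpoint first touches the $R$-circle. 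For $L_\alpha R_\pi$ it exhibits a shorter $RG$ path to the same endpoint (Lemma~\ref{lemma: non-optimality_C_alpha_C_pi_path}), and $r=\sqrt{3}/2$ is where that $RG$ path degenerates because $\mathbf X_f$ reaches the $R$-circle boundary (so Lemma~\ref{lemma: regions_non_existence_LG_path} blocks the construction for larger $r$). Your proposed adjoint-propagation route may be salvageable, but as stated it does not produce a comparison path, and the paper's argument is considerably more direct.
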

To derive the candidate optimal paths, two cases can considered depending on the value of $\lambda:$ $\lambda = 0,$ and $\lambda = 1.$

\subsection{Abnormal controls ($\lambda = 0$)}

Since the optimal control action depends on $H_{12},$ it is desired to determine its evolution to obtain the candidate optimal path types. Using the expression for $\dot{H}_{12}$ given in \eqref{eq: evolution_hamiltonian_functions}, and noting that it is differentiable, the expression for $\Ddot{H}_{12}$ can be derived to be \cite{kumar2024equivalencedubinspathsphere}
\begin{align} \label{eq: diff_equation_H12}
    \frac{d^2 H_{12} (s)}{ds^2} + \left(1 + \kappa^2 (s) \right) H_{12} (s) = 0. 
\end{align}
where $\lambda = 0$ and the expression for the Hamiltonian in \eqref{eq: Hamiltonian} were used. Since $\kappa$ is piecewise constant from \eqref{eq: optimal_controls}, the solution for $H_{12}$ and $\dot{H}_{12}$ can be obtained for each interval wherein $\kappa$ is constant. 
In this case, the following propositions and lemma can be shown. 

\begin{proposition} \label{prop: optimal_path_e_0}
    For $\lambda = 0,$ the optimal path is a concatenation of $C$ segments.
\end{proposition}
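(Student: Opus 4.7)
The plan is to use Lemma~1 together with the optimal control characterization in \eqref{eq: optimal_controls} to rule out $G$ segments when $\lambda = 0$, and then to invoke \eqref{eq: diff_equation_H12} to confirm that the switchings between $+U_{max}$ and $-U_{max}$ occur only at isolated points so that the path is a well-defined concatenation of $C$ arcs.

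First, I would argue by contradiction. A $G$ segment corresponds to $\kappa \equiv 0$ on an open subinterval $I \subseteq [0,L]$. Inspecting \eqref{eq: optimal_controls}, the only branch of the optimal control that produces $\kappa = 0$ on an interval is the singular branch, and that branch demands both $H_{12} \equiv 0$ on $I$ and $\lambda = 1$. The hypothesis $\lambda = 0$ therefore rules out this branch, and Lemma~1 states exactly the same obstruction. Consequently, no subinterval of the optimal path is a $G$ segment, and on the remainder of $[0,L]$ the control takes the bang--bang form $\kappa(s) = -U_{max}\,\text{sgn}(H_{12}(s)) \in \{-U_{max}, +U_{max}\}$, i.e.\ each piece is a $C$ segment.

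Next, to verify that these $C$ segments concatenate in a well-defined way (without chattering), I would exploit \eqref{eq: diff_equation_H12}. On the bang--bang portion we have $\kappa^2 \equiv U_{max}^2$, so \eqref{eq: diff_equation_H12} reduces to the constant-coefficient ODE $\ddot{H}_{12} + (1 + U_{max}^2) H_{12} = 0$ throughout $[0,L]$; together with the continuity of $H_{12}$ and $\dot{H}_{12}$ inherited from PMP, this forces $H_{12}$ to be a single sinusoid $A\cos(\omega s) + B\sin(\omega s)$ with $\omega = \sqrt{1 + U_{max}^2}$. Its zeros are isolated (in fact, finite in number on $[0,L]$), and the identically-zero solution is excluded by the previous paragraph via Lemma~1. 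Hence the sign-changes of $H_{12}$ partition $[0,L]$ into finitely many $C$ arcs, proving the proposition.

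I do not expect a substantive obstacle: the claim is essentially a direct corollary of Lemma~1 paired with the optimal-control form \eqref{eq: optimal_controls}. The only subtlety worth spelling out is confirming that the singular branch cannot appear even on a set of positive measure buried inside the bang--bang control, which is handled cleanly by the global sinusoidal structure of $H_{12}$ described above.
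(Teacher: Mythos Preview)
Your argument is correct and is precisely the standard PMP reasoning one expects here; the paper does not spell out an independent proof but simply defers to Proposition~3 of \cite{free_terminal_sphere}, whose argument is the same bang--bang/sinusoid analysis you outline. The only point worth making explicit is why the constant-coefficient form of \eqref{eq: diff_equation_H12} holds on all of $[0,L]$: at the (isolated) zeros of $H_{12}$ the term $(1+\kappa^2)H_{12}$ vanishes regardless of the value of $\kappa$, so the sinusoidal solution extends across switching points by the continuity of $H_{12}$ and $\dot H_{12}$ that you invoke.
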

\begin{proof}
    The proof follows using the same argument used to establish Proposition~3 in \cite{free_terminal_sphere}.
\end{proof}

\begin{lemma} \label{lemma: angle_C_segment_e_0}
    For $\lambda = 0,$ the arc angle of a $C$ segment that is completely traversed is exactly $\pi$ radians. 
\end{lemma}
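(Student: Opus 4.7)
The plan is to exploit the ODE \eqref{eq: diff_equation_H12} governing $H_{12}$ on each constant-$\kappa$ piece, together with the switching rule \eqref{eq: optimal_controls}. The central observation is that on any $C$ arc, $\kappa = \pm U_{max}$, so $1 + \kappa^2 = 1 + U_{max}^2 = 1/r^2$, and \eqref{eq: diff_equation_H12} reduces to the simple harmonic oscillator
\begin{align*}
    \ddot{H}_{12}(s) + \frac{1}{r^2}\, H_{12}(s) = 0.
\end{align*}
Hence $H_{12}$ evolves as a sinusoid with angular frequency $1/r$ throughout the segment, and consecutive zeros of any nontrivial such sinusoid are separated by arc length exactly $\pi r$.

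Next, I would show that the two endpoints of a completely traversed $C$ segment are zeros of $H_{12}$. By Proposition \ref{prop: optimal_path_e_0}, for $\lambda = 0$ the entire path is a concatenation of $C$ arcs, so every interior switching time is a transition between two $C$ arcs of opposite sign, which by \eqref{eq: optimal_controls} requires $H_{12}$ to change sign and hence vanish there. If the segment under consideration is the last one and terminates at $s = L$, then Lemma \ref{lemma: transversality_condition} directly gives $H_{12}(L) = 0$. Thus the two endpoints of any completely traversed $C$ segment are zeros of $H_{12}$.

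To exclude the trivial case $H_{12} \equiv 0$ on the segment, I would invoke Lemma~1 of the excerpt: identically vanishing $H_{12}$ would force $\lambda = 1$, contradicting the standing hypothesis $\lambda = 0$. Hence $H_{12}$ is a nontrivial solution of the harmonic oscillator above, and the two bounding zeros are consecutive, giving the arc length of the segment as exactly $\pi r$. Finally, since the $C$ arc lies on a small circle of Euclidean radius $r$ on the unit sphere, arc angle equals arc length divided by $r$, yielding $\pi$ radians, as claimed.

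I expect the main obstacle to be the careful bookkeeping of what \emph{completely traversed} means and why both endpoints must be zeros of $H_{12}$, especially if the segment is the first one in the concatenation where no transversality condition is supplied; the remaining steps amount to integrating a constant-coefficient linear ODE and converting arc length on a small circle of radius $r$ into arc angle.
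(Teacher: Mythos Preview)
Your argument is correct and is exactly the standard one that the paper invokes by citing Lemma~4 of \cite{free_terminal_sphere}: on a $C$ arc the equation \eqref{eq: diff_equation_H12} becomes $\ddot{H}_{12}+H_{12}/r^{2}=0$, $H_{12}$ is nontrivial by Lemma~1, and the endpoints of a completely traversed arc are zeros of $H_{12}$, forcing arc length $\pi r$ and hence arc angle $\pi$. Two small points worth tightening: (i) make explicit that on the interior of a single $C$ arc $H_{12}$ keeps a fixed sign (this is immediate from \eqref{eq: optimal_controls}, since a zero in the interior would be a switch), so the two boundary zeros are indeed \emph{consecutive}; and (ii) your worry about the first segment is resolved by the paper's convention---``completely traversed'' is used precisely for those $C$ arcs bounded on both sides by inflection points or by the terminal time (cf.\ the proof of Proposition~\ref{prop: candidate_paths_abnormal_controls}), so the initial $C_\alpha$ arc is, by definition, not required to satisfy the conclusion.
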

\begin{proof}
    The proof follows from Lemma~4's proof in \cite{free_terminal_sphere}.
\end{proof}

\begin{proposition} \label{prop: candidate_paths_abnormal_controls}
    The candidate optimal path for $\lambda = 0$ is of type $C_\alpha C_\pi C_\pi \cdots C_\pi.$
\end{proposition}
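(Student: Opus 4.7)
The plan is to combine Proposition~\ref{prop: optimal_path_e_0}, Lemma~\ref{lemma: angle_C_segment_e_0}, and Lemma~\ref{lemma: transversality_condition} to pin down the arc angle of every $C$ segment of the path. At a high level, on each $C$ segment $H_{12}$ satisfies a harmonic oscillator with constant coefficients, the bang-bang switching of $\kappa$ is driven by the zeros of $H_{12}$, and the terminal zero of $H_{12}$ supplied by transversality rigidly constrains the last segment.

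First, by Proposition~\ref{prop: optimal_path_e_0} the path is already known to be a concatenation of $C$ segments separated by the zeros of $H_{12}$, at which $\kappa = -U_{max}\,\text{sgn}(H_{12})$ switches sign, cf.\ \eqref{eq: optimal_controls}. Any interior segment is bounded on both sides by such zeros and hence is completely traversed in the sense of Lemma~\ref{lemma: angle_C_segment_e_0}, so its arc angle equals exactly $\pi$. The first segment begins at $s=0$, where $H_{12}(0)$ is unconstrained; labeling its arc angle $\alpha$ absorbs this freedom and accounts for the leading $C_\alpha$ factor.

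What remains is to show that the terminal segment, which ends at $s=L$ rather than at an interior switch, also has arc angle $\pi$ whenever it is not already the first segment. I would invoke Lemma~\ref{lemma: transversality_condition} to obtain $H_{12}(L)=0$, so that both endpoints of the terminal segment are zeros of $H_{12}$. On this segment $\kappa^2=U_{max}^2$, and \eqref{eq: diff_equation_H12} becomes the harmonic oscillator $\ddot{H}_{12}+(1+U_{max}^2)H_{12}=0$. Since $\lambda=0$, Lemma~1 forbids $H_{12}\equiv 0$, so consecutive zeros of $H_{12}$ are separated by the half period $\pi/\sqrt{1+U_{max}^2}=\pi r$ in arc length, i.e.\ an arc angle of $\pi$. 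Concatenating with the interior analysis yields the claimed structure $C_\alpha C_\pi C_\pi\cdots C_\pi$.

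The step I expect to require the most care is the treatment of degenerate cases, namely (i) a path consisting of a single $C$ segment, for which there are no trailing $C_\pi$ factors and $\alpha$ itself is pinned by $H_{12}(L)=0$, and (ii) ensuring that no spurious zero of $H_{12}$ inside a would-be $C_\pi$ segment causes an artificial extra switch. Both are handled by the half-period observation above, which is the only real computation needed beyond what is already quoted from \cite{free_terminal_sphere}.
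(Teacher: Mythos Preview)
Your proposal is correct and follows essentially the same route as the paper's proof: interior $C$ segments have arc angle $\pi$ by Lemma~\ref{lemma: angle_C_segment_e_0}, and the terminal segment is forced to arc angle $\pi$ because $H_{12}(L)=0$ from Lemma~\ref{lemma: transversality_condition}, which the paper phrases as the final segment being ``completely traversed.'' The only cosmetic difference is that you unpack the half-period harmonic-oscillator argument explicitly for the last segment, whereas the paper simply invokes the same reasoning underlying Lemma~\ref{lemma: angle_C_segment_e_0} by reference to \cite{free_terminal_sphere}.
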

\begin{proof}
    The proof follows by first noting that all intermediary $C$ segments have an arc angle of $\pi$ radians using Lemma~\ref{lemma: angle_C_segment_e_0}. Moreover, the final $C$ segment must also be completely traversed since $H_{12}$ is zero at inflection points and $H_{12} (L) = 0,$ similar to the argument used in \cite{free_terminal_sphere}. Using these two arguments, the proof is completed.
\end{proof}

It is now desired to derive a sufficient list of paths for $\lambda = 0$. To this end, it is claimed that the optimal path is of type $C_\alpha$ for $r \leq \frac{\sqrt{3}}{2},$ where $\alpha \leq \pi.$
\begin{lemma} \label{lemma: non-optimality_C_alpha_C_pi_path}
    $C_\alpha C_\pi$ path is non-optimal for $r \leq \frac{\sqrt{3}}{2}, \alpha \leq \pi$.
\end{lemma}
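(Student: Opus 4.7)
The plan is to exhibit, for every candidate $C_\alpha C_\pi$ path with $\alpha \leq \pi$, a strictly shorter admissible path from the same initial configuration to the same final location $\mathbf{X}_f$, which precludes optimality of any path of this type. By the left-right symmetry of the problem, I would reduce to the case $L_\alpha R_\pi$, with the mirror case $R_\alpha L_\pi$ following identically. The first step is to obtain an explicit expression for $\mathbf{X}_f$ as a function of $\alpha$ by integrating the Sabban-frame equations, or equivalently by composing the Lie-group exponentials $\exp(\alpha(l_1+U_{max}L_{12}))$ and $\exp(\pi(l_1-U_{max}L_{12}))$ acting on $g(0)$. A key geometric simplification is that the terminating $R_\pi$ is a full semicircle on its small circle of Euclidean radius $r$, so its endpoint is diametrically opposite the intermediate point on that small circle, giving a clean closed form for $\mathbf{X}_f$ in terms of the initial configuration and $\alpha$.

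Next, I would construct a strictly shorter alternative. The natural candidate is a $CG$ path of type $L_{\tilde{\alpha}}G_{\tilde{\beta}}$ or $R_{\tilde{\alpha}}G_{\tilde{\beta}}$ terminating at $\mathbf{X}_f$; the construction of such $CG$ paths for a given final location is already addressed earlier in the paper. The arc-length of $L_\alpha R_\pi$ is $r(\alpha+\pi)$, while the alternative has length $r\tilde{\alpha}+\tilde{\beta}$, so the goal reduces to the spherical-trigonometric inequality $r\tilde{\alpha}+\tilde{\beta}<r(\alpha+\pi)$. I expect the hypothesis $r\leq\frac{\sqrt{3}}{2}$ to enter precisely at this inequality: since $\frac{\sqrt{3}}{2}=\cos(\pi/6)$, the threshold has the clean interpretation that the small-turn-circle lies in a plane at Euclidean distance at least $1/2$ from the sphere's center, and a law of cosines on the sphere will likely reduce the length comparison to a monotone condition in $r$ that becomes an equality at $r=\frac{\sqrt{3}}{2}$.

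The main obstacle I anticipate is twofold. First, choosing an alternative path that works uniformly in $\alpha\in(0,\pi]$ may require a case split, because the geometry near $\alpha\approx 0$ (where $L_\alpha R_\pi$ is close to a pure $R_\pi$ arc) is qualitatively different from $\alpha\approx\pi$ (where the first segment is itself nearly a semicircle); different $CG$ constructions, or even a $C$-only alternative on the opposite-sense small circle, may be appropriate in the different regimes. Second, and more delicately, the length inequality must be carried out carefully enough that the threshold $r\leq\frac{\sqrt{3}}{2}$ emerges naturally from the arithmetic rather than appearing as an ad hoc bound. The boundary cases $\alpha=0$ (reducing to a bare $R_\pi$), $\alpha=\pi$ (first segment a full semicircle), and the possibility that $\mathbf{X}_f$ lies on a singular locus of the $CG$ construction must also be checked, but these should reduce to auxiliary calculations once the core inequality is in place.
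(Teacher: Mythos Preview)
Your overall strategy—reduce to $L_\alpha R_\pi$ by symmetry and construct a shorter $CG$ alternative—matches the paper, but you misidentify where the bound $r\le\frac{\sqrt{3}}{2}$ enters. In the paper the alternative is specifically an $RG$ path whose $R$ arc lies on the right-turn circle tangent at the \emph{initial} configuration, and the length comparison is handled in one stroke by spherical convexity rather than by an explicit trigonometric inequality: the $L_\alpha R_\pi$ curve and the $RG$ curve bound a spherically convex region, so the inner path is shorter. Consequently no case split on $\alpha$ is needed, and your anticipated difficulty near $\alpha\approx 0$ versus $\alpha\approx\pi$ does not arise.

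The hypothesis $r\le\frac{\sqrt{3}}{2}$ does \emph{not} appear in the length comparison at all; it is exactly the condition under which the alternative $RG$ path exists. By the $RG$-analogue of the earlier non-existence lemma for $LG$ paths, an $RG$ path to $\mathbf{X}_f$ exists iff $\mathbf{X}_f$ lies outside (or on) the initial $R$ disk, and computing $\mathbf{X}_f=\mathbf{R}_L(\alpha)\mathbf{R}_R(\pi)e_1$ shows this holds precisely for $r\le\frac{\sqrt{3}}{2}$; at $r=\frac{\sqrt{3}}{2}$ one finds $\mathbf{X}_f=(-\tfrac12,0,-\tfrac{\sqrt{3}}{2})^T$ independently of $\alpha$, lying on the $R$ circle, while for marginally larger $r$ the endpoint falls inside the disk. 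So your plan to extract the threshold from a law-of-cosines length estimate would have you searching for structure in the wrong place: the convexity comparison works uniformly in $\alpha$ and $r$ whenever the alternative exists, and it is the existence question—already resolved earlier in the paper—that fixes $\frac{\sqrt{3}}{2}$.
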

The proof will be provided in Section~\ref{subsect: abnormal_controls_CCpi_non_optimality} since it requires an additional result pertaining to the existence of a $CG$ path, which is derived in Section~\ref{sect: second_CG_path}.

\subsection{Normal controls ($\lambda = 1$)}

The candidate optimal path types for normal controls can be derived using the following lemmas.


\begin{lemma}
    For $\lambda = 1,$ a path of type $GC$ is non-optimal.
\end{lemma}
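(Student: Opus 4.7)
The plan is to exploit the transversality condition $H_{12}(L) = 0$ from Lemma~\ref{lemma: transversality_condition}, together with the continuity of the Hamiltonian functions across a $G$-to-$C$ switch, to show that any non-degenerate $GC$ candidate must traverse a whole number of complete loops on the small circle and is therefore strictly suboptimal.

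First I would characterize the $G$ segment. Since $\kappa \equiv 0$ there, the optimal-control formula \eqref{eq: optimal_controls} with $\lambda = 1$ requires $H_{12} \equiv 0$ throughout this segment; hence $\dot{H}_{12} \equiv 0$, and then \eqref{eq: evolution_hamiltonian_functions} forces $h_2 \equiv 0$ on the $G$ segment as well. Because $\zeta(s)$ is continuous along the extremal, so are $H_{12}$ and $h_2$; at the switching instant $s_1$ from $G$ to $C$ this continuity gives $H_{12}(s_1) = 0$, $h_2(s_1) = 0$, and therefore $\dot{H}_{12}(s_1) = -h_2(s_1) = 0$.

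Next I would derive the dynamics of $H_{12}$ on the $C$ segment for $\lambda = 1$, which differs from the homogeneous equation \eqref{eq: diff_equation_H12} obtained for $\lambda = 0$. Invoking conservation of the autonomous Hamiltonian $H = -1 + h_1 - \kappa H_{12} = E$ yields $h_1 = E + 1 + \kappa H_{12}$; substituting into $\ddot{H}_{12} = -\dot{h}_2 = -(H_{12} + \kappa h_1)$ produces the non-homogeneous linear ODE
\begin{align*}
    \ddot{H}_{12} + \omega^2 H_{12} = -\kappa(E+1), \quad \omega := \sqrt{1 + U_{max}^2} = 1/r.
\end{align*}
On the $C$ segment, $\kappa = \pm U_{max}$ is constant, so with the zero initial data obtained above the unique solution is
\begin{align*}
    H_{12}(s) = H_{12}^p \left( 1 - \cos\bigl(\omega(s - s_1)\bigr) \right), \quad H_{12}^p := -\frac{\kappa(E+1)}{\omega^2}.
\end{align*}

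Finally, applying Lemma~\ref{lemma: transversality_condition} at $s = L$ yields $H_{12}^p\bigl(1 - \cos(\omega \ell_C)\bigr) = 0$, where $\ell_C = L - s_1$ denotes the arc length of the $C$ segment. Either $H_{12}^p = 0$, in which case $H_{12} \equiv 0$ on the $C$ segment and the lemma cited from \cite{kumar2024equivalencedubinspathsphere} forces $\kappa \equiv 0$ there, contradicting that it is a turning segment; or $\omega \ell_C = 2k\pi$ for some non-negative integer $k$, so that $\ell_C = 2k\pi r$. The case $k = 0$ reduces the path to $G$ alone (a degenerate $CG$), while for $k \geq 1$ the $C$ segment consists of $k$ complete loops on the small circle and returns the vehicle to the endpoint of the preceding $G$ segment; excising those loops yields a strictly shorter path still reaching the free-orientation terminal location $\mathbf{X}_f$, contradicting optimality. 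The main obstacle I anticipate is the careful derivation of the non-homogeneous ODE in the $\lambda = 1$ case (the homogeneous version in \eqref{eq: diff_equation_H12} would instead trivially force $H_{12} \equiv 0$ on the $C$ segment from the zero initial data and collapse the argument) together with establishing the continuity of $h_2$ at the switching instant from the absolute continuity of the adjoint $\zeta$.
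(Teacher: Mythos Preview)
Your argument is correct and is essentially the approach the paper invokes (via Lemma~8 of \cite{free_terminal_sphere}): use $H_{12}=h_2=0$ at the $G\!\to\!C$ junction, integrate the second-order equation for $H_{12}$ along the $C$ arc, and combine with the transversality condition $H_{12}(L)=0$ to force the $C$ arc angle to be a multiple of $2\pi$, whence the $C$ segment can be excised. One small simplification: since the terminal arc length is free, the maximized Hamiltonian vanishes along the extremal, so $E=0$ and hence $H_{12}^p=-\kappa r^2\neq 0$ automatically; the $H_{12}^p=0$ branch need not be treated separately.
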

\begin{proof}
    The proof follows from the proof of Lemma~8 in \cite{free_terminal_sphere}.
\end{proof}
Hence, using the above lemma, it follows that if a $G$ segment is part of the path, then the optimal path can be only $G$ or $CG.$ Now, it is desired to characterize the optimal path that is made up of a concatenation of $C$ segments.
\begin{lemma} \label{lemma: angle_greater_than_pi_CCC}
     If inflection occurs at $s=s_i$ on the optimal path, then $h_1 (s_i) = 1$ and $H_{12} (s_i) = 0$. Furthermore, if $s_1, s_2$ are consecutive inflection points corresponding to a $CCC$ path, then $s_2-s_1 > \pi r$, i.e., the middle segment must have a length greater than $\pi r$.
\end{lemma}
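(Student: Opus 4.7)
My approach is to establish the two assertions in turn. The first follows by combining the switching structure of the optimal control with the PMP condition that the maximized Hamiltonian vanishes along the optimal trajectory. Since the dynamics are autonomous and the terminal arc length $L$ is free, PMP yields $H \equiv 0$. At an inflection point $s_i$ the control $\kappa$ switches sign, which by \eqref{eq: optimal_controls} forces $H_{12}(s_i)=0$. Substituting $H_{12}(s_i)=0$ and $\lambda=1$ into \eqref{eq: Hamiltonian} then gives $h_1(s_i)=1$.

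For the second assertion, the identity $H\equiv 0$ also supplies the algebraic conservation law $h_1 = 1 + \kappa H_{12}$ on any arc where $\kappa$ is constant. Substituting this into $\dot{h}_2 = H_{12} + \kappa h_1$ from \eqref{eq: evolution_hamiltonian_functions} and then using $\dot{H}_{12} = -h_2$ yields
\begin{align*}
    \ddot{H}_{12}(s) + \omega^2\, H_{12}(s) = -\kappa, \qquad \omega := \sqrt{1+\kappa^2} = \tfrac{1}{r},
\end{align*}
which holds on the middle $C$ segment $[s_1,s_2]$ where $\kappa = \pm U_{\max}$ is constant. Imposing $H_{12}(s_1)=0$ and $\dot{H}_{12}(s_1) = -h_2(s_1) =: -\mu$, the solution factors as
\begin{align*}
    H_{12}(s) = -\tfrac{2}{\omega}\sin\left(\tfrac{\omega(s-s_1)}{2}\right)\left[\tfrac{\kappa}{\omega}\sin\left(\tfrac{\omega(s-s_1)}{2}\right)+\mu\cos\left(\tfrac{\omega(s-s_1)}{2}\right)\right].
\end{align*}

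I would next pin down the sign of $\mu\kappa$ by tracing how $H_{12}$ crosses zero at $s_1$. The middle-segment control $\kappa$ is opposite in sign to the preceding segment's control, so \eqref{eq: optimal_controls} forces $H_{12}$ to have sign $-\mathrm{sgn}(\kappa)$ immediately after $s_1$; combined with $\dot{H}_{12}(s_1)=-\mu$, this yields $\mathrm{sgn}(\mu)=\mathrm{sgn}(\kappa)$, i.e., $\mu\kappa>0$. The bracket factor in the expression above vanishes exactly when $\tan\bigl(\omega(s-s_1)/2\bigr) = -\mu\omega/\kappa < 0$, whose smallest positive solution places $\omega(s-s_1)/2$ in $(\pi/2,\pi)$, while the sine factor does not vanish again until $\omega(s-s_1)=2\pi$. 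Therefore the next zero $s_2$ of $H_{12}$ satisfies $\omega(s_2-s_1)\in(\pi,2\pi)$, so $s_2-s_1\in(\pi r, 2\pi r)$ and in particular $s_2-s_1 > \pi r$.

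The main obstacle I anticipate is the sign bookkeeping around the switch at $s_1$: one has to combine the PMP sign rule on both adjacent arcs, the continuity of $h_2$ across the switch, and the requirement that $H_{12}$ genuinely cross zero at a true inflection (rather than merely touch it). Once $\mathrm{sgn}(\mu\kappa)$ is fixed, the remaining trigonometric zero analysis is essentially routine, although a brief verification that the bracket factor has a simple zero in $(\pi,2\pi)$ is needed to confirm that $H_{12}$ actually changes sign at $s_2$, matching the PMP switching requirement.
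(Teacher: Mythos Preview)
Your argument is correct and is essentially the standard derivation: the paper itself does not prove this lemma but simply defers to Lemma~3.3 of \cite{3D_Dubins_sphere}, and your route---using $H\equiv 0$ together with \eqref{eq: evolution_hamiltonian_functions} to obtain the forced oscillator $\ddot{H}_{12}+(1+\kappa^2)H_{12}=-\kappa$ on the middle arc, then locating the next zero via the half-angle factorization and the sign relation $\mu\kappa>0$---is exactly the mechanism used there. Your sign bookkeeping at $s_1$ is right (the degenerate case $\mu=0$ gives $H_{12}$ of constant sign and hence no second inflection, which you correctly flag), so nothing further is needed.
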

\begin{proof}
    The proof follows from the proof for Lemma~3.3 in \cite{3D_Dubins_sphere}.
\end{proof}

\begin{lemma}
    For $\lambda = 1,$ a concatenation of $C$ segments can be a candidate optimal path if it is of type $C_\alpha C_{\pi + \beta} C_{\pi + \beta} \cdots C_{\pi + \beta},$ where $\alpha, \beta > 0$.   
\end{lemma}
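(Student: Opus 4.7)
My plan is to leverage three structural ingredients: (i) the Hamiltonian $H = -1 + h_1 - \kappa H_{12}$ is conserved and pointwise maximized to zero along the optimal trajectory; (ii) the transversality condition $H_{12}(L) = 0$ from Lemma~\ref{lemma: transversality_condition}; and (iii) the invariant $Q := h_1^2 + h_2^2 + H_{12}^2$, which one verifies directly from \eqref{eq: evolution_hamiltonian_functions}. Combining (i) with the bang-bang law $\kappa = -U_{max}\,\mathrm{sgn}(H_{12})$ yields $h_1 = 1 - U_{max}|H_{12}|$, so at every interior inflection and---by (ii)---at the terminal $s = L$, one has $h_1 = 1$ and $H_{12} = 0$. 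Invariant (iii) then forces $|h_2|$ to take a common value at all such ``boundary'' points.

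On any $C$ segment bounded by two such boundary points with constant control $\kappa \in \{\pm U_{max}\}$, I would substitute $h_1 = 1 + \kappa H_{12}$ into $\ddot H_{12} = -H_{12} - \kappa h_1$ to obtain $\ddot H_{12} + \omega^2 H_{12} = -\kappa$ with $\omega = 1/r$. The closed-form solution with $H_{12}(s_i) = 0$ and $\dot H_{12}(s_i) = -h_2(s_i)$ factors, via the half-angle identities, as a scalar multiple of $\sin(\theta/2)\,[\kappa r \sin(\theta/2) + h_2(s_i)\cos(\theta/2)]$, where $\theta = \omega(s - s_i)$. Hence the next zero of $H_{12}$ past $s_i$ is determined by $\tan(\theta/2) = -h_2(s_i)/(\kappa r)$. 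A sign check---$\kappa$ and $h_2(s_i)$ must share a sign, since $H_{12}$ changes sign at an inflection and $\kappa = -U_{max}\,\mathrm{sgn}(H_{12})$---places $\tan(\theta/2) < 0$, whence $\theta \in (\pi, 2\pi)$. This recovers Lemma~\ref{lemma: angle_greater_than_pi_CCC} for interior segments and, by the same argument applied at $s = L$, extends the bound to the last segment.

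To conclude that every segment after the first shares the same arc angle, I would evaluate $h_2(s_{i+1}) = \kappa r \sin\theta + h_2(s_i)\cos\theta$ from the derivative of the closed-form solution and reduce it, using the half-angle identities together with $\tan(\theta/2) = -h_2(s_i)/(\kappa r)$, to exactly $-h_2(s_i)$. Since $\kappa$ also flips sign across the inflection, the ratio $-h_2/(\kappa r)$ is preserved, so every subsequent segment satisfies the identical half-angle equation and carries the common arc angle $\theta = \pi + \beta$ with $\beta \in (0, \pi)$. The first segment, whose left endpoint at $s = 0$ need not satisfy $H_{12}(0) = 0$, has an arbitrary arc angle $\alpha > 0$, producing the advertised form $C_\alpha C_{\pi + \beta} \cdots C_{\pi + \beta}$.

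The main obstacle I anticipate is the sign-reversal $h_2(s_{i+1}) = -h_2(s_i)$: the exact cancellation hinges on combining the half-angle identities with the zero condition defining $\theta$, and uses $\sin(\theta/2) \neq 0$, which holds precisely because $\theta$ is the \emph{first} return of $H_{12}$ to zero on the segment. Once this symmetry is in hand, the uniformity of the tail arc angles---and hence the stated path structure---follows immediately.
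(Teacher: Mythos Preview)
Your proposal is correct and follows essentially the same route as the paper: the paper's one-line proof cites Lemma~\ref{lemma: angle_greater_than_pi_CCC} (whose proof is in \cite{3D_Dubins_sphere}) for the interior $C$ arcs and Lemma~10 of \cite{free_terminal_sphere} for the terminal arc, and what you have written is precisely the ODE/half-angle computation underlying those cited results. Your invocation of the Casimir invariant $Q=h_1^2+h_2^2+H_{12}^2$ is a clean consistency check, though your own direct calculation $h_2(s_{i+1})=-h_2(s_i)$ already delivers the equal-arc conclusion without it.
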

\begin{proof}
    The proof follows using Lemma~\ref{lemma: angle_greater_than_pi_CCC} and the proof of Lemma~10 in \cite{free_terminal_sphere} to show that the intermediary $C$ segments and the final $C$ segment, respectively, have an arc angle of $\pi + \beta$ radians.
\end{proof}

It is now claimed that a subpath of a $C_\alpha C_{\pi + \beta} C_{\pi + \beta}$ path is non-optimal for $r \leq \frac{\sqrt{3}}{2}.$ To show this claim, regions of optimality of a $C_\alpha C_{\pi + \beta}$ path will first be shown, which is inspired by a similar argument for the planar problem in \cite{bui_free_terminal}.
\begin{lemma} \label{lemma: regions_optimality_CC_pi_beta}
    An $L_\alpha R_{\pi + \beta}$ ($R_\alpha L_{\pi + \beta}$) path can be optimal only if the final location lies inside the $R$ ($L$) segment.
\end{lemma}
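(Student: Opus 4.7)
The approach is to prove the lemma by contradiction via a variational argument, adapting the planar analysis of \cite{bui_free_terminal} to the spherical setting. Suppose the $L_\alpha R_{\pi+\beta}$ path is optimal. Denote by $P_1$ the $L$-to-$R$ inflection point and by $O_R$ the pole of the small circle on which the $R$ arc lies. Because the arc angle $\pi+\beta$ exceeds $\pi$, the $R$ arc sweeps more than half of its small circle. Together with the geodesic chord from $P_1$ to $\mathbf{X}_f$, the arc therefore bounds a spherical region, and I will take ``$\mathbf{X}_f$ lies inside the $R$ segment'' to mean that $\mathbf{X}_f$ lies on the $O_R$-side of this chord (so that the chord is effectively crossed during the sweep).

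Assuming for contradiction that $\mathbf{X}_f$ lies outside this region, I would construct a strictly shorter competing path to the same terminal location. Concretely, I would perturb $\alpha \to \alpha-\epsilon$ and solve implicitly for $\beta(\epsilon)$ from the requirement that the product of rotations $e^{U_{\max} L_{12}(\alpha-\epsilon)} \cdot e^{-U_{\max} L_{12}(\pi+\beta(\epsilon))}$, applied to the starting configuration, produces a terminal rotation matrix whose first column still equals $\mathbf{X}_f$. A first-order expansion yields $d\beta/d\alpha$ at $\epsilon=0$, from which the first variation of the total length $r(\alpha-\epsilon) + r(\pi+\beta(\epsilon))$ can be read off. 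Using spherical trigonometry, I would argue that the sign of $1 - d\beta/d\alpha$ coincides with the side of the chord $\overline{P_1 \mathbf{X}_f}$ on which the terminal configuration sits; whenever $\mathbf{X}_f$ is outside the ``inside'' region, one can choose the sign of $\epsilon$ so that the total length strictly decreases, contradicting optimality. The $R_\alpha L_{\pi+\beta}$ case then follows from the left-right symmetry of the dynamics.

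The main obstacle will be the non-commutativity of rotations on $SO(3)$: unlike in the plane, the two rotations composing the $LR$ path are taken about distinct axes that move as $\alpha$ varies, so the implicit differentiation of $\beta(\epsilon)$ produces Lie-algebraic coupling terms that must be handled carefully. I expect the governing geometric quantity to collapse to a spherical triple product involving $O_L$, $O_R$, and $\mathbf{X}_f$ (or equivalently a signed area on the sphere), whose sign cleanly partitions the sphere into the ``inside'' and ``outside'' regions associated with the $R$ segment. A short continuity argument will then be needed to ensure that the perturbed path remains admissible (with $\beta(\epsilon)>0$ and the $LR$ structure preserved) for sufficiently small $\epsilon$, completing the contradiction.
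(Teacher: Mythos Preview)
Your variational scheme breaks down before it starts. The endpoint map $(\alpha,\gamma)\mapsto \mathbf{X}(\alpha,\gamma)$ for $L_\alpha R_\gamma$ paths sends a two-dimensional parameter space to the two-dimensional sphere, so for a generic $\mathbf{X}_f$ the preimage is a \emph{discrete} set of $LR$ paths, not a one-parameter family. There is therefore no implicit function $\beta(\epsilon)$ to differentiate, and the quantity $d\beta/d\alpha$ on which your whole argument rests is undefined. (Incidentally, the exponentials you wrote, $e^{\pm U_{\max}L_{12}\,\cdot}$, are pure rotations about $\mathbf{X}$ and do not move the base point at all; the generators of $L$ and $R$ turns are $l_1\mp U_{\max}L_{12}$.)

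You have also misread the statement. In this paper ``the $R$ segment'' means the right-turn small circle \emph{at the initial configuration}, not the $R$ arc of the $L_\alpha R_{\pi+\beta}$ path; this is how the lemma is used immediately afterward to rule out $C_\pi C_{\pi+\beta}$ by checking whether $\mathbf{X}_f$ falls inside that initial circle. Your own interpretation is internally inconsistent anyway: you define a geodesic chord having $\mathbf{X}_f$ as an endpoint and then ask which side of it $\mathbf{X}_f$ lies on. The paper's proof is not variational but a direct geometric construction: when $\mathbf{X}_f$ lies outside the initial $R$ circle, one exhibits a \emph{discretely different} $L_{\alpha'}R$ path with $\alpha'<\alpha$ reaching the same $\mathbf{X}_f$, and spherical convexity shows it is strictly shorter.
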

\begin{proof}
    Consider $\alpha \in (0, 2 \pi)$ and $\beta \in (0, \pi)$ for an $L_\alpha R_{\pi + \beta}$ path such that $\mathbf{X}_f$ lies outside the $R$ segment. For such a final location, an alternate shorter $LR$ path can be constructed wherein the arc angle of the $L$ segment
    is less than $\alpha,$ as shown in cyan in Fig.~\ref{fig: depiction_non_optimality_LR}.
    Using spherical convexity, it follows that the alternate path is of a lower length than the original $L_\alpha R_{\pi + \beta}$ path (shown in orange).
    Hence, the $L_\alpha R_{\pi + \beta}$ path is non-optimal.
    A similar argument can be used for an $R_\alpha L_{\pi + \beta}$ path.
    \begin{figure}[htb!]
        \centering
        \includegraphics[width=0.4\linewidth]{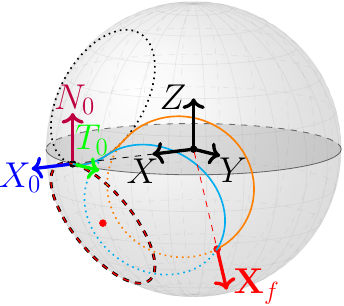}
        \caption{Depiction of non-optimality of $L_\alpha R_{\pi + \beta}$ path when $\mathbf{X}_f$ lies outside the $R$ segment using a shorter $LR$ path}
        \label{fig: depiction_non_optimality_LR}
    \end{figure}
\end{proof}

It is now desired to show the non-optimality of a subpath of a $C_\alpha C_{\pi + \beta} C_{\pi + \beta}$ path for $r \leq \frac{\sqrt{3}}{2}$.

\begin{lemma} \label{lemma: non_optimality_Cpi_Cpi_beta}
    A $C_\pi C_{\pi + \beta}$ path is non-optimal for $r \leq \frac{\sqrt{3}}{2}.$
\end{lemma}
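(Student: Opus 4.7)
The plan is to reduce the claim to Lemma~\ref{lemma: regions_optimality_CC_pi_beta} by showing that, for a path of type $L_\pi R_{\pi+\beta}$ with $r \le \sqrt{3}/2$, the final location $\mathbf{X}_f$ lies outside the $R$ segment; the analogous case $R_\pi L_{\pi+\beta}$ then follows by mirror symmetry.

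First, I would fix coordinates with $\mathbf{n}_L = (0,0,1)$, so that the $L$ circle lies in the plane $z = \cos\rho$ where $r = \sin\rho$. Since $L_\pi$ is a complete semicircle on this small circle, its endpoint $P$ is the Euclidean reflection of $\mathbf{X}_0$ through the center of the $L$ circle, and the tangent at $P$ is the reverse of the tangent at $\mathbf{X}_0$. The tangency and curvature constraints then place the axis $\mathbf{n}_R$ of the ensuing $R$ circle on the great circle through $\mathbf{n}_L$ and $P$, at spherical distance $2\rho$ from $\mathbf{n}_L$. Parameterizing the $R_{\pi+\beta}$ arc by its arc angle $\phi \in [0,\pi+\beta]$ would then yield a closed-form expression for $\mathbf{X}_f$ as a function of $\rho$ and $\beta$.

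Second, I would convert the ``inside the $R$ segment'' criterion of Lemma~\ref{lemma: regions_optimality_CC_pi_beta} into a concrete inequality in $(\rho,\beta)$. The threshold $r = \sqrt{3}/2$, equivalently $\rho = \pi/3$, is distinguished by the clean intermediate fact that at $\beta = 0$ the path $L_\pi R_\pi$ terminates exactly at the antipode of $\mathbf{n}_L$ with its tangent restored to $\mathbf{T}_0$. For $\rho \le \pi/3$, a perturbation in $\beta > 0$ then carries $\mathbf{X}_f$ onto the side of the $R$ circle opposite the concave region associated with the remaining $R$ sweep, so the ``inside'' condition fails for every $\beta \in (0,\pi)$.

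Invoking Lemma~\ref{lemma: regions_optimality_CC_pi_beta} would then supply a strictly shorter alternate $LR$ path whose $L$-arc has angle less than $\pi$, contradicting the assumed optimality of $L_\pi R_{\pi+\beta}$. The main obstacle will be the second step: cleanly identifying the geometric ``outside'' condition produced by Lemma~\ref{lemma: regions_optimality_CC_pi_beta} and verifying that the bound is sharp at $\rho = \pi/3$; a separate check of the endpoint limits $\beta \to 0^+$ and $\beta \to \pi^-$ may be needed, since $\cot(\beta/2)$ blows up as $\beta \to 0^+$ and the ``inside/outside'' geometry degenerates as $\beta \to \pi^-$.
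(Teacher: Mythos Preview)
Your plan is the same reduction the paper uses: invoke Lemma~\ref{lemma: regions_optimality_CC_pi_beta} by showing that the terminal point of $L_\pi R_{\pi+\beta}$ falls outside ``the $R$ segment'' whenever $r\le \sqrt{3}/2$. The paper's own argument is purely pictorial (it appeals to a side-view figure), so your analytic version would in fact be a tightening rather than a departure.

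There is, however, one genuine ambiguity in your sketch that would derail the computation if resolved the wrong way. The $R$ circle against which Lemma~\ref{lemma: regions_optimality_CC_pi_beta} tests ``inside/outside'' is the $R$ turn at the \emph{initial} configuration $(\mathbf{X}_0,\mathbf{T}_0)$, not the ensuing $R$ circle after the $L_\pi$ segment. Your $\mathbf{n}_R$ at spherical distance $2\rho$ from $\mathbf{n}_L$ through the point $P$ is the axis of the \emph{ensuing} circle, which you need only to parameterize $\mathbf{X}_f$; the circle you must compare $\mathbf{X}_f$ against has its axis at distance $2\rho$ from $\mathbf{n}_L$ on the \emph{other} side, through $\mathbf{X}_0$. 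Your phrase ``the concave region associated with the remaining $R$ sweep'' suggests you may be testing against the wrong circle, in which case the criterion is vacuous (since $\mathbf{X}_f$ lies \emph{on} the ensuing circle by construction). Once you fix this, the concrete inequality is simply $\mathbf{X}_f\cdot \mathbf{n}_R^{(0)}\le \cos\rho$, and your antipode observation at $\rho=\pi/3$, $\beta=0$ is exactly the boundary case the paper identifies. Two minor notes: the fact that the tangent returns to $\mathbf{T}_0$ after $L_\pi R_\pi$ holds for every $r$, not just $r=\sqrt{3}/2$, so it is not what singles out the threshold; and your perturbation remark only covers $\beta\approx 0$, so you will still need the global check over $\beta\in(0,\pi)$, which the paper leaves to the figure.
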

\begin{proof}
    Consider a $L_\pi R_{\pi + \beta}$ path. For $r < \frac{\sqrt{3}}{2},$ $\mathbf{X}_f$ always lies outside the $R$ segment corresponding to the initial configuration, as can be observed from the side view of the segments shown in Fig.~\ref{subfig: LR_path_non_optimality_r_0_5}. For $r = \frac{\sqrt{3}}{2},$ $\mathbf{X}_f$ lies on the $R$ segment corresponding to the initial configuration for $\beta \rightarrow 0^-$, as shown in Fig.~\ref{subfig: LR_path_non_optimality_r_sqrt3_2}. For a marginally larger $r$, there exists a $\beta$ for which $\mathbf{X}_f$ will lie inside the $R$ segment. Hence, for $r \leq \frac{\sqrt{3}}{2},$ the $L_\pi R_{\pi + \beta}$ path is non-optimal using Lemma~\ref{lemma: regions_optimality_CC_pi_beta}. A similar argument applies for a $R_\pi L_{\pi + \beta}$ path.
    \begin{figure}[htb!]
        \centering
        \subfloat[$r < \frac{\sqrt{3}}{2}$]{\includegraphics[width = 0.4\linewidth]{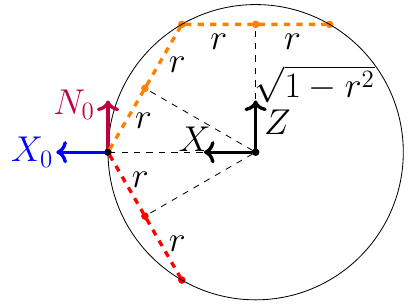}
        \label{subfig: LR_path_non_optimality_r_0_5}} \hfil
        \subfloat[$r = \frac{\sqrt{3}}{2}$]{\includegraphics[width = 0.4\linewidth]{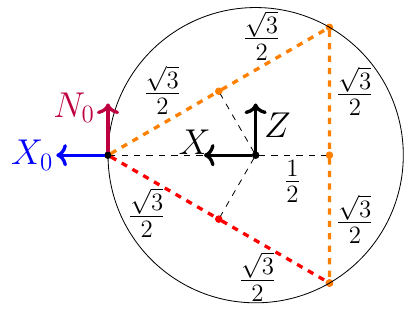}
        \label{subfig: LR_path_non_optimality_r_sqrt3_2}}
        \caption{Side view of segments of the $L_\pi R_{\pi + \beta}$ path (in orange) and $R$ segment at the initial configuration (in red)}
        \label{fig: LR_path_non_optimality_depiction}
    \end{figure}
\end{proof}

In the following section, the existence of two $LG$ (and $RG$) paths to attain a desired final location and the non-optimality of the second $LG$ ($RG$) path will be shown to reduce the number of candidate optimal paths.

\section{Non-optimality of second $CG$ path} \label{sect: second_CG_path}

In this section, two feasible $LG$ paths connecting a given initial configuration, which is chosen to be the identity matrix without loss of generality, and a final location $\mathbf{X}_f$ with coordinates $(\beta_1, \beta_2, \beta_3)$ will first be derived. The final locations (or regions on the sphere) for which an $LG$ path does not exist will then be shown in Lemma~\ref{lemma: regions_non_existence_LG_path}, using which the non-optimality of the second $LG$ path will be shown in Lemma~\ref{lemma: non-optimality_second_LG_path}. It should be noted that the analysis for an $RG$ path follows similarly, and it is ignored for brevity.

Consider an $LG$ path as shown in Fig.~\ref{fig: depiction_LG_path}, wherein the arc angle of the $L$ and $G$ segments are $\phi_1$ and $\phi_2$, respectively. For a given final location, it is desired to obtain the solution(s) for $\phi_1$ and $\phi_2$. Since the $G$ segment corresponds to a rotation about $\mathbf{N},$ the equation to be satisfied is given by
\begin{align} \label{eq: equation_LG_segment}
    \mathbf{N}_1 \cdot \mathbf{X}_f = 0,
\end{align}
where $\mathbf{N}_1 = \mathbf{N} (\phi_1)$ represents the tangent-normal after traveling an arc angle $\phi_1$ along the $L$ segment. 
\begin{figure}[htb!]
    \centering
    \includegraphics[width = 0.4\linewidth]{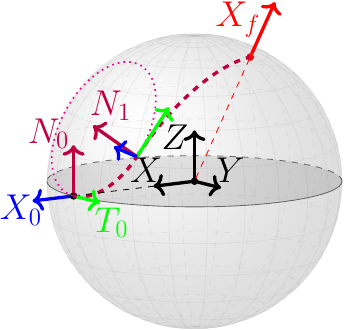}
    \caption{Solving for arc angles of the $LG$ path}
    \label{fig: depiction_LG_path}
\end{figure}

Let $\mathbf{R}_L$ denote the rotation matrix corresponding to the $L$ segment. Hence, $\mathbf{N} (\phi_1) = \mathbf{R}_L (\phi_1) e_3.$ Here, $e_i$ denotes the $i\textsuperscript{th}$ column vector of the identity matrix. Computing $\mathbf{N} (\phi_1)$ using the expression for $\mathbf{R}_L$ given in \cite{free_terminal_sphere}.
and substituting the obtained expression for $\mathbf{N} (\phi_1)$ in \eqref{eq: equation_LG_segment},
the equation obtained is given by
\begin{align} \label{eq: equation_phi_1_LG_path}
\begin{split}
    &c \phi_1 \left(\left(1 - r^2 \right) \beta_{3} - r \sqrt{1 - r^2} \beta_{1} \right) - s \phi_1 \sqrt{1 - r^2} \beta_{2} \\
    &= -r^2 \beta_{3} - r \sqrt{1 - r^2} \beta_{1}.
\end{split}
\end{align}
Here, $c \phi_1 := \cos{\phi_1}, s \phi_1 := \sin{\phi_1}.$ Suppose a solution for $\phi_1$ can be obtained from \eqref{eq: equation_phi_1_LG_path}. Then, using the rotation matrix corresponding to the $G$ segment ($\mathbf{R}_G$), which is derived in \cite{free_terminal_sphere}, the equation obtained for $\phi_2$ is given by
\begin{align} \label{eq: solving_phi2_LG_path}
    \mathbf{X}_f = \mathbf{R}_L (\phi_1) \mathbf{R}_G (\phi_2) e_1
    = c \phi_2 \mathbf{X} (\phi_1) + s \phi_2 \mathbf{T} (\phi_1),
\end{align}
Using \eqref{eq: equation_LG_segment}, it follows that $\mathbf{X} (\phi_1), \mathbf{T} (\phi_1), \mathbf{X}_f$ lie in the same plane.
Hence, a unique solution for $\phi_2$ can be obtained for a given solution for $\phi_1$ from \eqref{eq: solving_phi2_LG_path} since $\mathbf{X} (\phi_1) \cdot \mathbf{T} (\phi_1) = 0$.

Therefore, an $LG$ path does not exist if a solution for $\phi_1$ cannot be obtained from \eqref{eq: equation_phi_1_LG_path}. The final locations for which an $LG$ path does not exist 
are stated in the following lemma.

\begin{lemma} \label{lemma: regions_non_existence_LG_path}
    An $LG$ path does not exist if the final location or its antipodal point lies inside the circle corresponding to the $L$ segment.
\end{lemma}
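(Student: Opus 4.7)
The plan is to treat equation \eqref{eq: equation_phi_1_LG_path} as a sinusoidal equation in $\phi_1$, characterize algebraically when no real solution exists, and then recognize the resulting inequality as the condition that $\mathbf{X}_f$ or its antipode lies strictly inside the spherical cap bounded by the $L$ circle.

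First, I would rewrite \eqref{eq: equation_phi_1_LG_path} in the form $A\cos\phi_1 + B\sin\phi_1 = C$, with
\begin{align*}
    A &= (1-r^2)\beta_3 - r\sqrt{1-r^2}\,\beta_1,\\
    B &= -\sqrt{1-r^2}\,\beta_2,\\
    C &= -r^2 \beta_3 - r\sqrt{1-r^2}\,\beta_1.
\end{align*}
A real $\phi_1$ exists iff $A^2+B^2 \geq C^2$, so an $LG$ path fails exactly when $A^2+B^2 < C^2$. A short factorization gives $A-C = \beta_3$ and $A+C = (1-2r^2)\beta_3 - 2r\sqrt{1-r^2}\,\beta_1$; adding $B^2 = (1-r^2)\beta_2^2$ and eliminating $\beta_2$ via $\beta_1^2+\beta_2^2+\beta_3^2 = 1$, the expression telescopes to
\begin{align*}
    A^2 + B^2 - C^2 = (1-r^2) - \bigl(\sqrt{1-r^2}\,\beta_1 + r\beta_3\bigr)^2.
\end{align*}
Hence a solution for $\phi_1$ exists iff $\bigl|\sqrt{1-r^2}\,\beta_1 + r\beta_3\bigr| \leq \sqrt{1-r^2}$.

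Second, for the geometric identification, the $L$ circle starting from the identity configuration lies in a plane whose unit normal (the pole on the sphere), call it $\mathbf{P}_L$, satisfies $\mathbf{X}\cdot\mathbf{P}_L = \sqrt{1-r^2}$ along the circle. Enforcing this at $\mathbf{X}_0 = e_1$ fixes the first component of $\mathbf{P}_L$ to $\sqrt{1-r^2}$, and matching the centripetal direction $-\mathbf{X}_0 + U_{max}\mathbf{N}_0$ at $s=0$ pins down $\mathbf{P}_L = (\sqrt{1-r^2},\,0,\,r)^{T}$. Then $\mathbf{X}_f \cdot \mathbf{P}_L = \sqrt{1-r^2}\,\beta_1 + r\beta_3$, so $\mathbf{X}_f$ lies strictly inside the cap bounded by the $L$ circle iff $\mathbf{X}_f\cdot\mathbf{P}_L > \sqrt{1-r^2}$, and its antipode iff $\mathbf{X}_f\cdot\mathbf{P}_L < -\sqrt{1-r^2}$. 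Taking the disjunction yields precisely the strict version of the inequality from the previous paragraph, completing the equivalence.

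The main obstacle I expect is bookkeeping the sign of the third component of $\mathbf{P}_L$ and stating carefully what \emph{inside} the circle means on a sphere, since the $L$ circle bounds two spherical caps; the absolute value in the final inequality is what absorbs this ambiguity, so the argument has to explicitly account for both the $\mathbf{X}_f$-inside and $(-\mathbf{X}_f)$-inside cases producing the two signs of $\mathbf{X}_f\cdot\mathbf{P}_L$.
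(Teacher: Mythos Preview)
Your proposal is correct and follows essentially the same route as the paper: reduce \eqref{eq: equation_phi_1_LG_path} to a solvability condition $A^2+B^2\ge C^2$, simplify this (using $\sum\beta_i^2=1$) to $\bigl|\sqrt{1-r^2}\,\beta_1+r\beta_3\bigr|\le\sqrt{1-r^2}$, and then read off the geometry of the two caps. Your treatment is marginally cleaner in two places---the single inequality $A^2+B^2<C^2$ absorbs the paper's separate degenerate case $A=B=0$, and the factorization $A^2-C^2=(A-C)(A+C)=\beta_3\bigl[(1-2r^2)\beta_3-2r\sqrt{1-r^2}\beta_1\bigr]$ makes the simplification one line---but the argument is the same.
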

\begin{proof}
    Consider solving for $\phi_1$ using \eqref{eq: equation_phi_1_LG_path}. A solution for $\phi_1$ cannot be obtained if
    \begin{itemize}
        \item The coefficient of $c \phi_1$ and $s \phi_1$ are both zero, and the right-hand side (RHS) of \eqref{eq: equation_phi_1_LG_path} is non-zero, or
        \item At least one of the coefficients of $c \phi_1$ and $s \phi_1$ is non-zero, and the sum of squares of these two coefficients is less than the square of the RHS.
    \end{itemize}
    In the first case, $\beta_{1}, \beta_{2}, \beta_{3}$ such that both coefficients are zero from \eqref{eq: equation_phi_1_LG_path} is 
    $(\beta_1, \beta_2, \beta_3) = \pm \left(\sqrt{1 - r^2}, 0, r \right)$. Here, $\sum_{i = 1}^3 \beta_i^2 = 1$ was used since a unit sphere is considered. However, the RHS corresponding to these solutions in \eqref{eq: equation_phi_1_LG_path} is non-zero.
    Hence, no solution for $\phi_1$ can be obtained.

    A solution for $\phi_1$ cannot be obtained in the second case from \eqref{eq: equation_phi_1_LG_path} if $\left(\left(1 - r^2 \right) \beta_{3} - r \sqrt{1 - r^2} \beta_{1} \right)^2 + \left(1 - r^2 \right) \beta_2^2 < \left(-r^2 \beta_3 - r \sqrt{1 - r^2} \beta_1 \right)^2.$
    An equivalent condition can be obtained by expanding both sides and simplifying as
    \begin{align} \label{eq: condition_nonexistence_phi1_LG_path}
        \left|\sqrt{1 - r^2} \beta_1 + r \beta_3 \right| > \sqrt{1 - r^2}.
    \end{align}
    However, it is claimed that such final locations lie inside the circle corresponding to the $L$ segment or in its diametrically opposite circle, as shown in Fig.~\ref{subfig: fin_loc_LG_path_non_existence}. This is because the plane containing the $L$ segment and the diametrically opposite circle can be derived to be $\sqrt{1 - r^2} \beta_1 + r \beta_3 = \sqrt{1 - r^2}$ and  $\sqrt{1 - r^2} \beta_1 + r \beta_3 = -\sqrt{1 - r^2},$ respectively
    (refer to Fig.~\ref{subfig: fin_loc_LG_path_side_view}). 
    Using the derived equations of the planes and Fig.~\ref{fig: final_local_LG_path_boundary}, it follows that final locations that satisfy \eqref{eq: condition_nonexistence_phi1_LG_path} lie inside the $L$ segment or in the diametrically opposite circle.
    \begin{figure}[htb!]
        \centering
        \subfloat[Boundaries for which $LG$ path does not exist for $r = 0.5$]{\includegraphics[width = 0.4\linewidth]{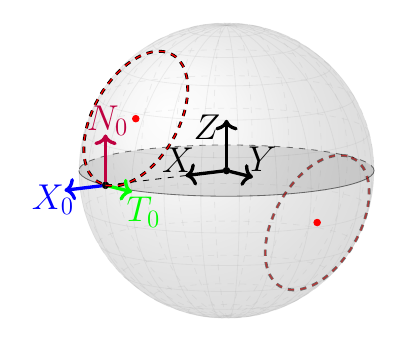}
        \label{subfig: fin_loc_LG_path_non_existence}} \hfill
        \subfloat[Side view of $L$ segment and its diametrically opposite circle]{\includegraphics[width = 0.4\linewidth]{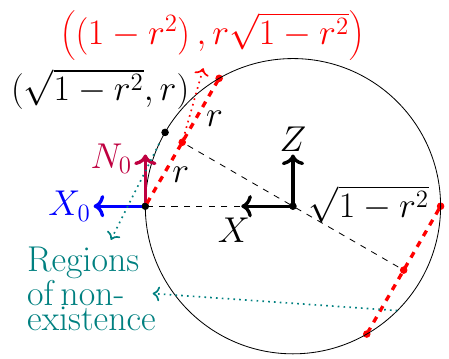}
        \label{subfig: fin_loc_LG_path_side_view}}
        \caption{Boundaries for non-existence of $LG$ path for $r = 0.5$}
        \label{fig: final_local_LG_path_boundary}
    \end{figure}

    Noting that $(\beta_1, \beta_2, \beta_3) = \pm (\sqrt{1 - r^2}, 0, r)$ also satisfies the inequality in \eqref{eq: condition_nonexistence_phi1_LG_path}, the claim in the lemma is proved.
\end{proof}

From Lemma~\ref{lemma: regions_non_existence_LG_path}, it follows that at least one solution exists for $\phi_1$ when the final location is such that \eqref{eq: condition_nonexistence_phi1_LG_path} is not satisfied. For such final locations, \eqref{eq: equation_phi_1_LG_path} can be rewritten by dividing both sides by $D = \big(\left(\left(1 - r^2 \right) \beta_3 - r \sqrt{1 - r^2} \beta_1 \right)^2 + \left(1 - r^2 \right) \beta_2^2\big)^{\frac{1}{2}},$ and defining\footnotemark
\footnotetext{If $D = 0$, then the $LG$ path does not exist for the corresponding final location, as shown in Lemma~\ref{lemma: regions_non_existence_LG_path}. Hence, $D \neq 0$ when the $LG$ path exists.}
\begin{align} \label{eq: definition_gamma_LG_path}
\begin{split}
    c \gamma := \frac{\left(1 - r^2 \right) \beta_3 - r \sqrt{1 - r^2} \beta_1}{D}, \,\,
    s \gamma := \frac{\sqrt{1 - r^2} \beta_2}{D},
\end{split}
\end{align}
as
\begin{align} \label{eq: expression_solution_phi_1_LG_path}
    \cos{\left(\phi_1 + \gamma \right)} = \frac{-r^2 \beta_3 - r \sqrt{1 - r^2} \beta_1}{D} := \delta.
\end{align}
At most two solutions can be obtained for $\phi_1$ from \eqref{eq: expression_solution_phi_1_LG_path} as
\begin{align}
    \phi_1^{s1} &= \bmod \left(\cos^{-1} \left(\delta \right) - \gamma, 2 \pi\right), \label{eq: solution_1_phi1_LG_path} \\
    \phi_1^{s2} &= \bmod \left(2 \pi - \cos^{-1} \left(\delta \right) - \gamma, 2 \pi\right). \label{eq: solution_2_phi1_LG_path}
\end{align}
Corresponding to each solution for $\phi_1$, a unique solution for $\phi_2$ can be obtained from \eqref{eq: solving_phi2_LG_path} since $\cos{\phi_2} = \mathbf{X}_f \cdot \mathbf{X} (\phi_1),$ $\sin{\phi_2} = \mathbf{X}_f \cdot \mathbf{T} (\phi_1).$
Using the expressions for $\mathbf{X} (\phi_1)$ and $\mathbf{T} (\phi_1)$ from $\mathbf{R}_L$ given in \cite{free_terminal_sphere}, $\cos{\phi_2}$ and $\sin{\phi_2}$ can be obtained as
\begin{align}
\begin{split} \label{eq: expression_cos_phi2_LG_path}
    \cos{\phi_2} &= \left(1 - \left(1 - \cos{\phi_1} \right) r^2 \right) \beta_1 + r \sin{\phi_1} \beta_2 \\
    & \quad\, + \left(1 - \cos{\phi_1} \right) r \sqrt{1 - r^2} \beta_3,
\end{split} \\
    \sin{\phi_2} &= - r \sin{\phi_1} \beta_1 + \cos{\phi_1} \beta_2 + \sin{\phi_1} \sqrt{1 - r^2} \beta_3. \label{eq: expression_sin_phi2_LG_path}
\end{align}
Noting that $\phi_2 \in [0, 2 \pi),$ its expression can be obtained as
\begin{align} \label{eq: solution_phi2_LG_path}
    \phi_2 = \bmod \left(\atantwo \left(\sin{(\phi_2)}, \cos{(\phi_2)} \right), 2 \pi \right),
\end{align}
where $\bmod$ denotes the modulus function, and $\atantwo(y, x): \mathbb{R}^2 - \{(0, 0) \} \rightarrow (-\pi, \pi]$ provides the angle of the vector joining $(0, 0)$ to $(x, y)$ with respect to the $x-$axis. Hence, two $LG$ paths have been constructed.


Using the derived expressions for both $LG$ paths, it is desired to show the non-optimality of the second $LG$ path.

\begin{lemma} \label{lemma: non-optimality_second_LG_path}
    The second $LG$ path's length is greater than or equal to the first $LG$ path's length. Moreover, the $G$ segment's arc angle is in $[0, \pi]$ for the first $LG$ path.
\end{lemma}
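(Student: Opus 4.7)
The plan is to establish three facts: (i) $\phi_2^{s1}+\phi_2^{s2}=2\pi$; (ii) $\phi_2^{s1}\in[0,\pi]$ (which immediately gives $\phi_2^{s2}\in[\pi,2\pi]$ and the ``moreover'' claim); and (iii) $\ell_2\geq\ell_1$, where $\ell_i:=r\phi_1^{s_i}+\phi_2^{s_i}$. For (i), I would substitute the identities $\phi_1^{s1}\equiv\mu-\gamma$ and $\phi_1^{s2}\equiv-\mu-\gamma$ (mod $2\pi$, with $\mu:=\cos^{-1}\delta$) into \eqref{eq: expression_cos_phi2_LG_path} and \eqref{eq: expression_sin_phi2_LG_path}. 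Using the definition \eqref{eq: definition_gamma_LG_path} of $\gamma$, the $\sin\mu$-odd component of $\cos\phi_2^{s1}-\cos\phi_2^{s2}$ and the $\cos\mu$-odd component of $\sin\phi_2^{s1}+\sin\phi_2^{s2}$ both vanish identically, yielding $\phi_2^{s2}=2\pi-\phi_2^{s1}$.

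For (ii), the same substitution collapses $\sin\phi_2^{s1}$ to $\sin\mu\cdot\sqrt{E^2+F^2}$, with $E:=\beta_2$ and $F:=-r\beta_1+\sqrt{1-r^2}\beta_3$. Using $\beta_1^2+\beta_2^2+\beta_3^2=1$ gives $E^2+F^2 = 1-(\sqrt{1-r^2}\beta_1+r\beta_3)^2 = \sin^2 d_O$, where $d_O$ is the spherical distance from $\mathbf{X}_f$ to the pole $O=(\sqrt{1-r^2},0,r)$ of the $L$-circle. Since $\mu,d_O\in[0,\pi]$, one has $\sin\phi_2^{s1}=\sin\mu\sin d_O\geq 0$, so $\phi_2^{s1}\in[0,\pi]$.

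For (iii), write $\ell_2-\ell_1 = r(\phi_1^{s2}-\phi_1^{s1}) + 2(\pi-\phi_2^{s1})$, whose last term is non-negative by (ii). A case analysis on $\gamma$ (relative to $\mu$ and $2\pi-\mu$) shows $\phi_1^{s2}-\phi_1^{s1}\in\{2\pi-2\mu,\,-2\mu\}$; when it equals $2\pi-2\mu$, both terms are non-negative and the result is immediate. In the remaining case $\phi_1^{s2}-\phi_1^{s1}=-2\mu$, the inequality reduces to $r\mu+\phi_2^{s1}\leq\pi$. To prove this, I would simplify the right-hand side and normalizing constant $D$ in \eqref{eq: equation_phi_1_LG_path} to obtain $C=-r\cos d_O$ and $D=\sqrt{1-r^2}\sin d_O$, yielding the clean relations $\cos\phi_2^{s1}=\cos d_O/\sqrt{1-r^2}$ and $\cos\mu=-r\cos d_O/(\sqrt{1-r^2}\sin d_O)$. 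Thus $\mu$ and $\phi_2^{s1}$ are functions of $d_O$ alone. Differentiating $f(d_O):=r\mu+\phi_2^{s1}$ with respect to $\cos d_O$ shows the derivative has constant sign on the existence interval $d_O\in(\arcsin r,\pi-\arcsin r)$ (its zeros occur only at $\cos^2 d_O=1-r^2$, i.e., at the boundaries). Evaluating at the endpoints gives $f=r\pi$ at $d_O=\arcsin r$ and $f=\pi$ at $d_O=\pi-\arcsin r$, so by monotonicity $f\leq\pi$ throughout.

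The main obstacle is this remaining subcase of (iii): the length inequality is not term-by-term, and proving $r\mu+\phi_2^{s1}\leq\pi$ requires reducing a two-dimensional estimate over $\mathbf{X}_f$ to a one-variable monotonicity in $d_O$, using the non-obvious fact that both $\mu$ and $\phi_2^{s1}$ depend on $\mathbf{X}_f$ only through $d_O$.
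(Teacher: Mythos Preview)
Your proposal is correct and follows essentially the same route as the paper. Your spherical distance $d_O$ to the pole of the $L$-circle satisfies $\cos d_O=\sqrt{1-r^2}\beta_1+r\beta_3$, which is exactly the paper's plane parameter $d$; your $\mu$ is the paper's $\psi(d)$; and your key inequality $r\mu+\phi_2^{s1}\le\pi$ is equivalent to the paper's $f(d)\le 0$, proved there by the same monotonicity-in-$d$ argument with the same endpoint values. The only organizational difference is that you handle $\Delta\phi_1$ by a two-case split on the modular reduction (one case trivial), whereas the paper parameterizes the final location by an extra angle $\theta$ and computes $\max_\theta\Delta\phi_1=2\psi(d)$ explicitly---these land on the identical one-variable estimate, so the distinction is cosmetic rather than substantive.
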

\begin{proof}
    Noting that the length of an $LG$ path is given by $l_{LG} = r \phi_1 + \phi_2,$ it is desired to show that
    \begin{align} \label{eq: length_difference_LG_path}
        \Delta l_{LG} = l_{LG}^{s1} - l_{LG}^{s2} = r \left(\phi_1^{s1} - \phi_1^{s2} \right) + \left(\phi_2^{s1} - \phi_2^{s2} \right),
    \end{align}
    is non-positive for all final locations for which an $LG$ path exists. Here, $\phi_2^{si}$ is the solution for $\phi_2$ corresponding to $\phi_1^{si}$ for $i \in \{1, 2 \}.$
    To prove this claim, the final locations for which the $LG$ path(s) exists will first be parameterized to lie on a plane parallel to the $L$ segment using Lemma~\ref{lemma: regions_non_existence_LG_path}.
    A depiction of two final locations lying on one such plane is shown in Fig.~\ref{fig: LG_path_changing_diff_location}. 

    \begin{figure}[htb!]
        \centering
        \subfloat[Final location above equator or on equator along $T_0$]{\includegraphics[width = 0.4\linewidth]{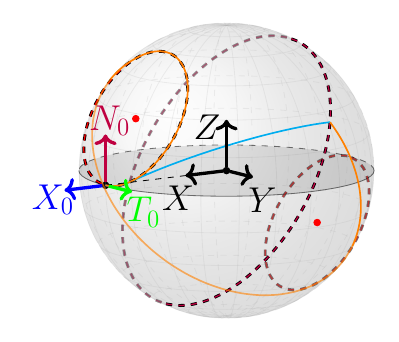}
        \label{subfig: LG_path_nonoptimality_Case_1}} \hfill
        \subfloat[Final location below equator or on equator behind $T_0$]{\includegraphics[width = 0.4\linewidth]{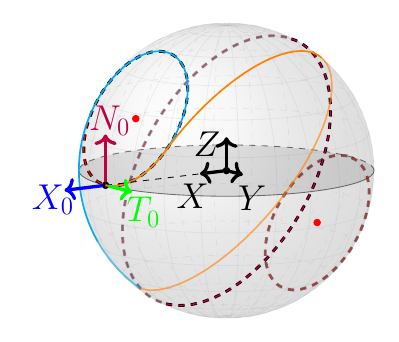}
        \label{subfig: LG_path_nonoptimality_Case_2}}
        \caption{Plane parallel to the $L$ segment, and the first (in cyan) and second (in orange) $LG$ paths for two final locations}
        \label{fig: LG_path_changing_diff_location}
    \end{figure}

    From Lemma~\ref{lemma: regions_non_existence_LG_path}, it follows that $LG$ paths exist for
    every final location lying on one of the family of planes given by 
    \begin{align} \label{eq: plane_equation_final_locations}
        \sqrt{1 - r^2} \beta_1 + r \beta_3 = d, \,\,\, d \in \left[-\sqrt{1 - r^2}, \sqrt{1 - r^2} \right].
    \end{align}
    It is desired to parameterize the final location lying on one such plane. Noting that the final location lies on the unit sphere, i.e., $\sum_{i = 1}^{3} \beta_i^2 = 1,$ and using the derived plane equation, it follows that
    $\left(\beta_1 - d \sqrt{1 - r^2} \right)^2 + r^2 \beta_2^2 = r^2 \left(1 - d^2 \right)$.
    Let $\beta_2 = \sqrt{1 - d^2} \cos{(\theta)}, \beta_1 = d \sqrt{1 - r^2} + r \sqrt{1 - d^2} \sin{(\theta)}.$ Hence, using \eqref{eq: plane_equation_final_locations} and the expression for $\beta_1$, it follows that $\beta_3 = d r - \sqrt{1 - r^2} \sqrt{1 - d^2} \sin{(\theta)}.$ 

    Using the considered parameterization, it is desired to obtain the expression for $\phi_1^{si}$ and $\phi_2^{si}$ for $i = 1, 2.$ To obtain the expression for $\phi_1^{si}$ given in \eqref{eq: solution_1_phi1_LG_path} and \eqref{eq: solution_2_phi1_LG_path}, the expression for $\gamma$ and $\delta$ must be obtained first. 
    For the considered parameterization of the final location, using the expressions for $s \gamma$ and $c \gamma$ given in \eqref{eq: definition_gamma_LG_path}, the expression for $\gamma := \atantwo \left(s \gamma, c \gamma \right)$ reduces to
	\begin{align*}
        \gamma &= \atantwo \left(\cos{(\theta)}, -\sin{(\theta)} \right) = \begin{cases}
            \frac{\pi}{2} + \theta, & 0 \leq \theta \leq \frac{\pi}{2}, \\
            -\frac{3\pi}{2} + \theta, & \frac{\pi}{2} < \theta < 2 \pi.
        \end{cases}
    \end{align*}
    The expression for $\delta,$ which is given in \eqref{eq: expression_solution_phi_1_LG_path}, for the considered parameterization is obtained after simplification as $\delta = -\frac{dr}{\sqrt{1 - r^2} \sqrt{1 - d^2}}.$ Hence, the expression for the two solutions for $\phi_1$ in \eqref{eq: solution_1_phi1_LG_path} and \eqref{eq: solution_2_phi1_LG_path} can be obtained as
    \begin{align}
        \phi_1^{s1} &= \bmod{\left(\psi (d) - \frac{\pi}{2} - \theta, 2 \pi \right)}, \label{eq: phi1s1_LG_path_simplified} \\
        \phi_1^{s2} &= \bmod{\left(\frac{3 \pi}{2} - \psi (d) - \theta, 2 \pi \right)}, \label{eq: phi1s2_LG_path_simplified}
    \end{align}
    where
    \begin{align} \label{eq: definition_psi}
        \psi (d) := \cos^{-1}{\left(\frac{-dr}{\sqrt{1 - r^2} \sqrt{1 - d^2}} \right)}.    
    \end{align}

    Using the derived parametric form for $\beta_1, \beta_2, \beta_3,$ and $\phi_1^{s1}$ and $\phi_1^{s2},$ $\cos{(\phi_2)}$ and $\sin{(\phi_2)},$ given in \eqref{eq: expression_cos_phi2_LG_path} and \eqref{eq: expression_sin_phi2_LG_path}, respectively, for the two solutions can be obtained as
    \begin{align}
        \sin{\left(\phi_2^{s1} \right)} &= \frac{\sqrt{1 - r^2 - d^2}}{\sqrt{1 - r^2}}, \,\, \cos{\left(\phi_2^{s1} \right)} = \frac{d}{\sqrt{1 - r^2}}, \label{eq: expression_sin_cos_phi2s1_parametric} \\
        \sin{(\phi_2^{s2})} &= \frac{-\sqrt{1 - r^2 - d^2}}{\sqrt{1 - r^2}}, \,\, \cos{(\phi_2^{s2})} = \frac{d}{\sqrt{1 - r^2}}. \label{eq: expression_sin_cos_phi2s2_parametric}
    \end{align}
    Noting that $\sin{(\phi_2^{s1})} \geq 0, \sin{(\phi_2^{s2})} \leq 0,$ it follows that $\phi_2^{s1} \in [0, \pi],$ $\phi_2^{s2} \in [\pi, 2 \pi) \bigcup \{0\}$ using \eqref{eq: solution_phi2_LG_path}.   

    Using the parametric form for $\phi_1$ and $\phi_2$ corresponding to the two solutions, it is desired to show that $\Delta l_{LG} \leq 0$ in \eqref{eq: length_difference_LG_path}. Noting that $\phi_1^{s1}$ and $\phi_1^{s2}$ depend on $\theta,$ whereas $\phi_2^{s1}$ and $\phi_2^{s2}$ are independent of $\theta,$ it suffices to show that
    \begin{align} \label{eq: inequality_to_be_shown_LG_path}
        f (d) = r \max_{\theta \in [0, 2\pi)} \Delta \phi_1 (\theta, d) + \Delta \phi_2 (d) \leq 0,
    \end{align}
    for every $r \in (0, 1)$ and $d \in [-\sqrt{1 - r^2}, \sqrt{1 - r^2}],$ since $\Delta l_{LG} (\theta, d) \leq f (d).$ To this end, the expressions for $\Delta \phi_1 = \phi_1^{s1} - \phi_1^{s2}$ and $\Delta \phi_2 = \phi_2^{s1} - \phi_2^{s2}$ are desired to be obtained.
    
    The expression for $\max_{\theta \in [0, 2\pi)} \Delta \phi_1$ can be obtained by deriving the explicit form for $\phi_1^{s1}$ and $\phi_1^{s2}$ given in \eqref{eq: phi1s1_LG_path_simplified} and \eqref{eq: phi1s2_LG_path_simplified}, respectively.
    Noting that $\theta \in [0, 2 \pi),$ each of the two functions can be observed to undergo a discontinuity due to the modulus function at most once. Additionally, since $\psi (d) \in \left[0, \frac{\pi}{2} \right)$ for $d \in [-\sqrt{1 - r^2}, 0)$ and $\psi (d) \in \left[\frac{\pi}{2}, \pi \right]$ for $d \in [0, \sqrt{1 - r^2}]$ from \eqref{eq: definition_psi}, the solution for $\phi_1^{s1}$ in \eqref{eq: phi1s1_LG_path_simplified} can be explicitly written for $d \in [-\sqrt{1 - r^2}, 0)$ as
    \begin{align}
        \phi_1^{s1} &= \begin{cases}
                \frac{3 \pi}{2} + \psi (d) - \theta, & \theta \in \left[0, \frac{3 \pi}{2} + \psi (d) \right], \\
                \frac{7 \pi}{2} + \psi (d) - \theta, & \theta \in \left(\frac{3 \pi}{2} + \psi (d), 2 \pi \right),
            \end{cases} \label{eq: explicit_form_phi1s1_d_neg}
    \end{align}
    and for $d \in [0, \sqrt{1 - r^2}]$ as
    \begin{align}
        \phi_1^{s1} &= \begin{cases}
            \psi (d) - \frac{\pi}{2} - \theta, & \theta \in \left[0, \psi (d) - \frac{\pi}{2} \right], \\
            \frac{3\pi}{2} + \psi (d) - \theta, & \theta \in \left(\psi (d) - \frac{\pi}{2}, 2 \pi \right).
        \end{cases} \label{eq: explicit_form_phi1s1_d_pos}
    \end{align}
    The solution for $\phi_1^{s2}$ can be explicitly written as
    \begin{align}
        \phi_1^{s2} = \begin{cases}
            \frac{3 \pi}{2} - \psi (d) - \theta, & \theta \in \left[0, \frac{3 \pi}{2} - \psi (d) \right], \\
            \frac{7 \pi}{2} - \psi (d) - \theta, & \theta \in \left(\frac{3 \pi}{2} - \psi (d), 2 \pi \right).
        \end{cases} \label{eq: explicit_form_phi1s2}
    \end{align}
    A depiction of $\phi_1^{s1}$ and $\phi_1^{s2}$ is provided in Fig.~\ref{fig: LG_path_non_optimality_first_angle_cases_with_d}.
    \begin{figure}[htb!]
        \centering
        \subfloat[Angles $\phi_1^{s1}$ and $\phi_1^{s2}$ depicted for $r = 0.3, d = -\sqrt{1 - r^2} + 0.1$]{\includegraphics[width = 0.45\linewidth]{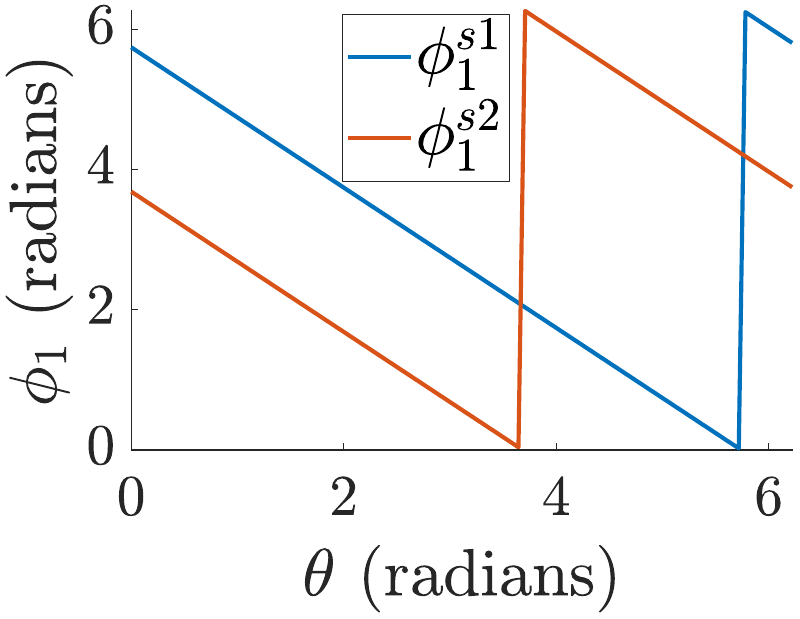}
        \label{subfig: case_1_phi1s1_angles}} \hfill
        \subfloat[Angles $\phi_1^{s1}$ and $\phi_1^{s2}$ depicted for $r = 0.3, d = \sqrt{1 - r^2} - 0.1$]{\includegraphics[width = 0.45\linewidth]{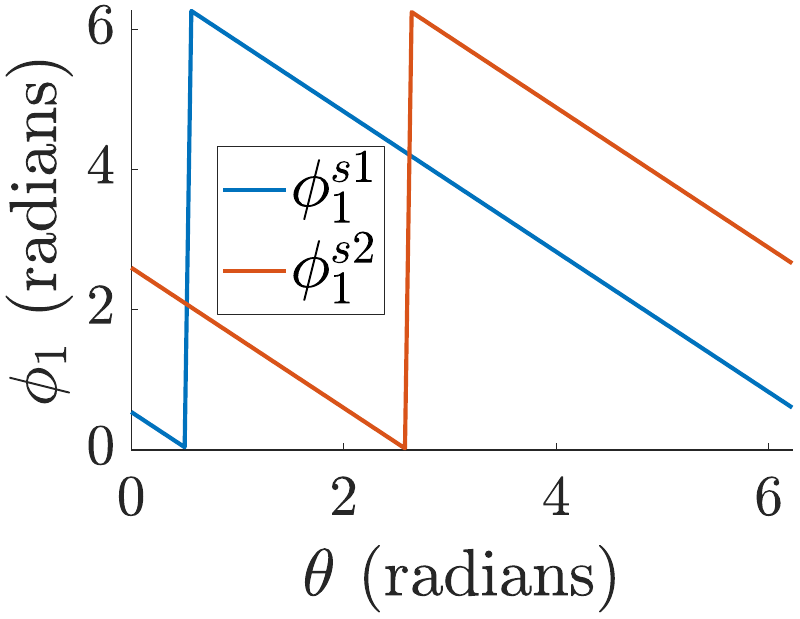}
        \label{subfig: case_2_phi1s1_angles}}
        \caption{Angles $\phi_1^{s1}$ and $\phi_1^{s2}$ for $d \in [-\sqrt{1 - r^2}, \sqrt{1 - r^2}]$}
        \label{fig: LG_path_non_optimality_first_angle_cases_with_d}
    \end{figure}

    Now, it is desired to obtain the maximum difference for $\Delta \phi_1.$ The value for $\Delta \phi_1$ for different sub-intervals of $\theta \in [0, 2 \pi)$ for $d \in [-\sqrt{1 - r^2}, 0)$ and $d \in [0, \sqrt{1 - r^2}]$ can be obtained as shown in Figs.~\ref{subfig: phi1_diff_d_less_than_zero} and \ref{subfig: phi1_diff_d_greater_than_equal_zero}, respectively. Hence, the maximum difference for $\Delta \phi_1$ can be obtained as
    \begin{align} \label{eq: max_difference_phi1s1_phi1s2}
        \max_{\theta \in [0, 2\pi)} \Delta \phi_1 = \begin{cases}
            2 \psi (d), & d \in \left[-\sqrt{1 - r^2}, \sqrt{1 - r^2} \right), \\
            0, & d = \sqrt{1 - r^2}.
        \end{cases}
    \end{align}
    It should be noted here that $d = \sqrt{1 - r^2}$ is a special case since $\psi (d) = \pi.$ Hence, $\phi_1^{s1} = \phi_1^{s2}$ from \eqref{eq: explicit_form_phi1s1_d_pos} and \eqref{eq: explicit_form_phi1s2}.
    \begin{figure}[htb!]
        \centering
        \subfloat[$\Delta \phi_1$ for $d \in [-\sqrt{1 - r^2}, 0)$]{\includegraphics[width = 0.475\linewidth]{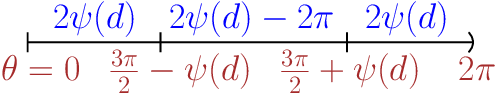}
        \label{subfig: phi1_diff_d_less_than_zero}} \hfill
        \subfloat[{$\Delta \phi_1$ for $d \in [0, \sqrt{1 - r^2}]$}]{\includegraphics[width = 0.475\linewidth]{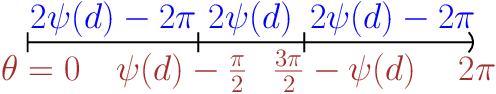}
        \label{subfig: phi1_diff_d_greater_than_equal_zero}}
        \caption{Depiction of $\Delta \phi_1$ for varying $\theta$ and $d$}
        \label{fig: difference_phi1s1_phi1s2_LG_path}
    \end{figure}

    Now, it is desired to obtain the expression for $\Delta \phi_2.$ Using the expressions for $\sin{(\phi_2^{s1})}$ and $\cos{(\phi_2^{s1})}$ given in \eqref{eq: expression_sin_cos_phi2s1_parametric}, and $\sin{(\phi_2^{s2})}$ and $\cos{(\phi_2^{s2})}$ given in \eqref{eq: expression_sin_cos_phi2s2_parametric}, $\phi_2^{s1}$ and $\phi_2^{s2}$ are obtained as shown in Fig.~\ref{fig: LG_path_non_optimality_second_angle}. From this figure, it follows that
    \begin{align} \label{eq: difference_phi2_solutions_parametric_form}
        \Delta \phi_2 = \begin{cases}
            -2 \eta (d), & d \in \left[-\sqrt{1 - r^2}, 0 \right), \\
            -\pi, & d = 0, \\
            -2 \pi + 2 \eta (d), & d \in \left(0, \sqrt{1 - r^2} \right), \\
            0, & d = \sqrt{1 - r^2}, \\
        \end{cases}
    \end{align}
    where 
    \begin{align} \label{eq: definition_eta}
        \eta (d) := \tan^{-1} \left(\frac{\sqrt{1 - r^2 - d^2}}{|d|} \right).
    \end{align}
    \begin{figure}[htb!]
        \centering
        \subfloat[$d \in [-\sqrt{1 - r^2}, 0)$]{\includegraphics[width = 0.4\linewidth]{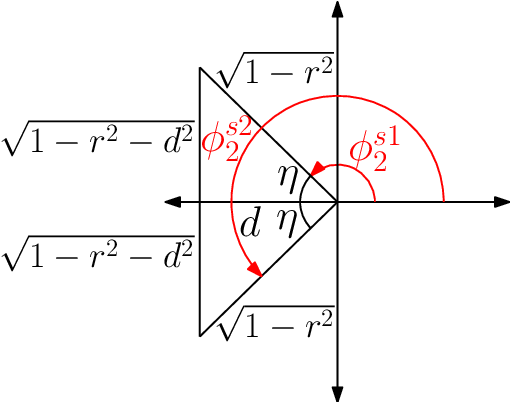}
        \label{subfig: case_1_LG_non-optimality_angle}} \hfill
        \subfloat[{$d \in$ $[0, \sqrt{1 - r^2}]$}]{\includegraphics[width = 0.4\linewidth]{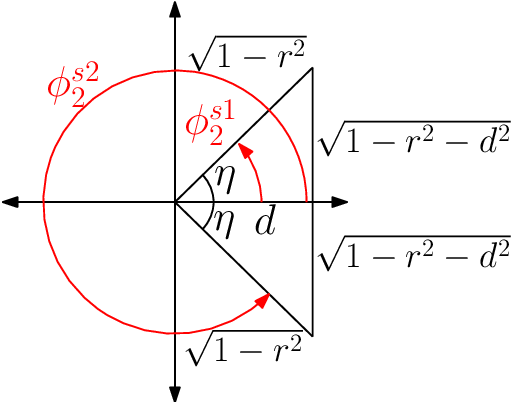}
        \label{subfig: case_3_LG_non-optimality_angle}}
        \caption{Angles $\phi_2^{s1}$ and $\phi_2^{s2}$ depicted for $d \in [-\sqrt{1 - r^2}, \sqrt{1 - r^2}]$}
        \label{fig: LG_path_non_optimality_second_angle}
    \end{figure}

    Substituting the derived expressions for $\max_{\theta \in [0, 2 \pi)} \Delta \phi_1$ and $\Delta \phi_2$
    in the left-hand side of \eqref{eq: inequality_to_be_shown_LG_path}, $f (d)$ is obtained as
    \begin{align*} 
    \begin{split}
        &f (d) = \begin{cases}
            2 r \psi (d) - 2 \eta (d), & d \in \left[-\sqrt{1 - r^2}, 0 \right), \\
            2 r \psi (d) - \pi, & d = 0, \\
            2 r \psi (d) + 2 \eta (d) - 2 \pi, & d \in \left(0, \sqrt{1 - r^2} \right), \\
            0, & d = \sqrt{1 - r^2}.
        \end{cases}
    \end{split}
    \end{align*}
    From the above equation, and using the definition of $\psi (d)$ in \eqref{eq: definition_psi} and $\eta (d)$ in \eqref{eq: definition_eta}, it can be observed that
    \begin{enumerate}
        \item $f (-\sqrt{1 - r^2}) = 0,$ $f (0) = \left(r - 1 \right) \pi,$ which is less than zero, and $f (\sqrt{1 - r^2}) = 0.$
        \item $f (d)$ is continuously differentiable for $d \in \left(-\sqrt{1 - r^2}, \sqrt{1 - r^2} \right) - \{0 \}.$
    \end{enumerate}
    Hence, it suffices to show that $f' (d) < 0$ for $d \in \left(-\sqrt{1 - r^2}, \sqrt{1 - r^2} \right) - \{0 \}$ to prove \eqref{eq: inequality_to_be_shown_LG_path}.
    The expression for $f' (d)$ can be derived to be
    \begin{align*}
        f' (d) &= \frac{-2 \sqrt{1 - r^2 - d^2}}{1 - d^2},
    \end{align*}
    in the considered intervals of $d.$
    Clearly, $f' (d) < 0,$ and hence, $f (d) \leq 0.$ As $\Delta l_{LG} (\theta, d) \leq f (d)$, the lemma proved.
    
    \textbf{Remark:} From Lemma~\ref{lemma: non-optimality_second_LG_path}, $f (d) = 0 \iff$ $d = \pm \sqrt{1 - r^2}.$ For such $d$ values, the final location lies on the $L$ segment or the diametrically opposite circle (refer to Fig.~\ref{subfig: fin_loc_LG_path_non_existence}). However, from Lemma~\ref{lemma: regions_non_existence_LG_path} and the expression for $\delta$ in \eqref{eq: expression_solution_phi_1_LG_path}, only one solution for the $LG$ path can be obtained since $\delta = \pm 1$. Hence, the two $LG$ paths are the same path. Therefore, the second $LG$ path is non-optimal for all $d \in [-\sqrt{1 - r^2}, \sqrt{1 - r^2}]$.
\end{proof}

Using Lemmas~\ref{lemma: non-optimality_C_alpha_C_pi_path}, \ref{lemma: non_optimality_Cpi_Cpi_beta}, and \ref{lemma: non-optimality_second_LG_path}, Theorem~\ref{theorem: main_theorem_free_terminal} is proved.

\section{Proof for Lemma~\ref{lemma: non-optimality_C_alpha_C_pi_path}} \label{subsect: abnormal_controls_CCpi_non_optimality}

In this section, the proof for Lemma~\ref{lemma: non-optimality_C_alpha_C_pi_path}, which pertains to non-optimality of a $C_\alpha C_\pi$ path for $r \leq \frac{\sqrt{3}}{2},$ will be provided.
\begin{proof}
    Consider an $L_\alpha R_\pi$ path, where $\alpha$ is sufficiently small. An alternate $RG$ path can be constructed, as shown in Fig.~\ref{subfig: cheaper_rg_path_r_0_4}, to attain the same final location. Due to spherical convexity, it follows that the $L_\alpha R_\pi$ path has a larger length than the $RG$ path. However, the alternate path exists only up to $r = \frac{\sqrt{3}}{2}$. For $r = \frac{\sqrt{3}}{2},$ the final location, given by $\mathbf{R}_L (\alpha) \mathbf{R}_R (\pi) e_1,$ reduces to $(-0.5, 0, -\frac{\sqrt{3}}{2})^T,$ which is independent of $\alpha.$\footnotemark \, In such a case, the alternate $RG$ path reduces to a degenerate $R$ segment. For a marginally larger $r$ and arbitrarily small $\alpha,$ 
    $\mathbf{X}_f$
    lies inside the $R$ segment, as shown in Fig.~\ref{subfig: rg_path_non_existence_r_greater_than_sqrt3_2}. For such a final location, an $RG$ path does not exist, which follows from Lemma~\ref{lemma: regions_non_existence_LG_path}. A similar argument can be used for an $R_\alpha L_\pi$ path.
    \footnotetext{The claim can be verified using the expressions for the two rotation matrices given in \cite{free_terminal_sphere}.}
    \begin{figure}[htb!]
        \centering
        \subfloat[$r = 0.4,$ $\alpha = 25^\circ$]{\includegraphics[width = 0.4\linewidth]{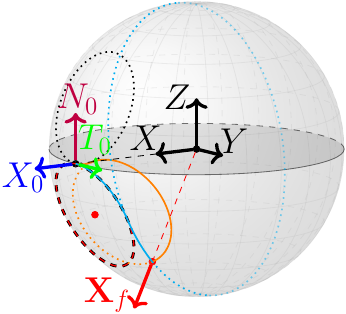}
        \label{subfig: cheaper_rg_path_r_0_4}} 
        \hfill
        \subfloat[{$r = 0.9, \alpha = 40^\circ$}]{\includegraphics[width = 0.4\linewidth]{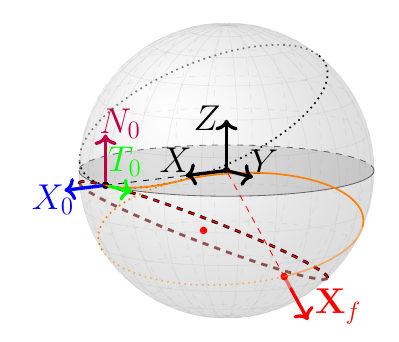}
        \label{subfig: rg_path_non_existence_r_greater_than_sqrt3_2}} 
        \caption{$RG$ path construction for $r \leq \frac{\sqrt{3}}{2}$ for non-optimality of $L_\alpha R_\pi$ path and non-existence of path for $r > \frac{\sqrt{3}}{2}$}
        \label{fig: depiction_rg_path}
    \end{figure}
\end{proof}

\section{Conclusion}

In this paper, the candidate optimal Dubins paths on a unit sphere to travel from a given initial configuration to a final location are shown to be of type $CG, CC,$ or a degenerate path of the same for $r \leq \frac{\sqrt{3}}{2}.$ Additionally, for the $CG$ paths, the arc angle of the $G$ segment is shown to be less than or equal to $\pi$ $\forall r \in (0, 1),$ which reduces the number of candidate $CG$ paths from four to two. Furthermore, for the $CC$ paths, the arc angle of the second $C$ segment is shown to be strictly greater than $\pi$ for $r \leq \frac{\sqrt{3}}{2}.$

\section*{Acknowledgment}

The authors thank Prof. Igor Zelenko from Texas A\&M University for providing the proof for Lemma~\ref{lemma: transversality_condition} \cite{communication}.

\bibliographystyle{IEEEtran}
\bibliography{IEEEabrv, cite}

\begin{thebibliography}{10}
\providecommand{\url}[1]{#1}
\csname url@samestyle\endcsname
\providecommand{\newblock}{\relax}
\providecommand{\bibinfo}[2]{#2}
\providecommand{\BIBentrySTDinterwordspacing}{\spaceskip=0pt\relax}
\providecommand{\BIBentryALTinterwordstretchfactor}{4}
\providecommand{\BIBentryALTinterwordspacing}{\spaceskip=\fontdimen2\font plus
\BIBentryALTinterwordstretchfactor\fontdimen3\font minus
  \fontdimen4\font\relax}
\providecommand{\BIBforeignlanguage}[2]{{%
\expandafter\ifx\csname l@#1\endcsname\relax
\typeout{** WARNING: IEEEtran.bst: No hyphenation pattern has been}%
\typeout{** loaded for the language `#1'. Using the pattern for}%
\typeout{** the default language instead.}%
\else
\language=\csname l@#1\endcsname
\fi
#2}}
\providecommand{\BIBdecl}{\relax}
\BIBdecl

\bibitem{Dubins}
L.~E. Dubins, ``On curves of minimal length with a constraint on average
  curvature, and with prescribed initial and terminal positions and tangents,''
  \emph{Amer. J. Math.}, vol.~79, 1957.

\bibitem{Reeds_Shepp}
J.~A. Reeds and L.~A. Shepp, ``Optimal paths for a car that goes both forwards
  and backwards,'' \emph{Pacific J. Math.}, vol. 145, 1990.

\bibitem{PMP}
L.~S. Pontryagin, V.~G. Boltyanskii, R.~V. Gamkrelidze, and E.~F. Mishchenko,
  \emph{The mathematical theory of optimal processes}.\hskip 1em plus 0.5em
  minus 0.4em\relax Interscience Publishers, 1962.

\bibitem{sussman_geometric_examples}
H.~J. Sussmann and G.~Tang, ``Shortest paths for the reeds-shepp car: a worked
  out example of the use of geometric techniques in nonlinear optimal
  control,'' Rutgers University, Tech. Rep., 1991.

\bibitem{boissonat}
J.-D. Boissonnat, A.~Cerezo, and J.~Leblond, ``Shortest paths of bounded
  curvature in the plane,'' in \emph{Proc. 1992 IEEE Int. Conf. Robot.
  Automat.}, 1992, pp. 2315--2320.

\bibitem{phase_portrait_kaya}
C.~Y. Kaya, ``Markov–dubins path via optimal control theory,'' \emph{Comput.
  Optim. Appl.}, vol.~68, pp. 714--747, 2017.

\bibitem{weighted_problem}
D.~P. Kumar, S.~Darbha, S.~G. Manyam, and D.~Casbeer, ``The weighted
  markov-dubins problem,'' \emph{IEEE Robot. and Automat. Letters}, vol.~8,
  no.~3, pp. 1563--1570, 2023.

\bibitem{monroy}
F.~Monroy-Pérez, ``Non-euclidean dubins' problem,'' \emph{J. Dyn. Control
  Syst.}, vol.~4, pp. 249--272, 1998.

\bibitem{sussman_3D}
H.~Sussmann, ``Shortest 3-dimensional paths with a prescribed curvature
  bound,'' in \emph{Proc. 1995 34th IEEE Conf. Decision Control}, 1995, pp.
  3306--3312.

\bibitem{time_optimal_synthesis_SO3}
U.~Boscain and Y.~Chitour, ``Time-optimal synthesis for left-invariant control
  systems on so(3),'' \emph{SIAM J. on Control and Optim.}, vol.~44, no.~1, pp.
  111--139, 2005.

\bibitem{time_optimal_control_satellite}
U.~Boscain, Y.~Chitour, and B.~Piccoli, ``Time optimal control of a satellite
  with two rotors,'' in \emph{2001 Eur. Control Conf. (ECC)}, 2001, pp.
  1721--1725.

\bibitem{3D_Dubins_sphere}
S.~Darbha, A.~Pavan, R.~Kumbakonam, S.~Rathinam, D.~W. Casbeer, and S.~G.
  Manyam, ``Optimal geodesic curvature constrained dubins’ paths on a
  sphere,'' \emph{J. of Optim. Theory and Appl.}, vol. 197, pp. 966--992, 2023.

\bibitem{free_terminal_sphere}
D.~P. Kumar, S.~Darbha, S.~G. Manyam, D.~Tran, and D.~W. Casbeer, ``Optimal
  geodesic curvature constrained dubins' path on sphere with free terminal
  orientation,'' in \emph{2023 Ninth Indian Control Conf. (ICC)}, 2023, pp.
  377--382.

\bibitem{kumar2024equivalencedubinspathsphere}
\BIBentryALTinterwordspacing
D.~P. Kumar, S.~Darbha, S.~G. Manyam, D.~W. Casbeer, and M.~Pachter,
  ``Equivalence of dubins path on sphere with geographic coordinates and moving
  frames,'' 2024. [Online]. Available: \url{https://arxiv.org/abs/2408.07206}
\BIBentrySTDinterwordspacing

\bibitem{bui_free_terminal}
X.-N. Bui and J.-D. Boissonnat, ``Accessibility region for a car that only
  moves forwards along optimal paths,'' INRIA, Tech. Rep., 1994.

\bibitem{communication}
I.~Zelenko, personal communication.

\end{thebibliography}

\end{document}